\numberwithin{equation}{section}
\theoremstyle{plain}
\newtheorem{theorem}[subsection]{Theorem}
\newtheorem{corollary}[subsection]{Corollary}
\newtheorem{proposition}[subsection]{Proposition}
\newtheorem{lemma}[subsection]{Lemma}
\theoremstyle{definition}
\newtheorem*{question}{Question}
\renewcommand{\leq}{\leqslant}
\renewcommand{\geq}{\geqslant}
\renewcommand{\Re}{\mathop{\rm Re}\nolimits}
\renewcommand{\Im}{\mathop{\rm Im}\nolimits}
\newcommand{\wh}{\widehat}
\newcommand{\Z}{\mathbb{Z}}
\newcommand{\C}{\mathbb{C}}
\newcommand{\T}{\mathbb{T}}
\begin{document}

\title{Difference sets and the primes}

\author{Imre Z. Ruzsa}
\address{Alfr\'ed R\'enyi Institute of Mathematics\\
Hungarian Academy of Sciences\\
     Budapest, Pf. 127\\
     H-1364 Hungary
} \email{ruzsa@renyi.hu}

\author{Tom Sanders}
\address{Department of Pure Mathematics and Mathematical Statistics\\
University of Cambridge\\
Wilberforce Road\\
Cambridge CB3 0WA\\
England } \email{t.sanders@dpmms.cam.ac.uk}

\begin{abstract}
Suppose that $A \subset \{1,\dots,N\}$ is such that the difference
between any two elements of $A$ is never one less than a prime. We
show that $|A| = O(N\exp(-c\sqrt[4]{\log N}))$ for some absolute
$c>0$.
\end{abstract}

\maketitle

\section{Introduction}

In this paper we prove the following theorem.
\begin{theorem}\label{thm.mainresult}
Suppose that $N$ is an integer and $A \subset \{1,\dots,N\}$ is such
that the difference between any two elements of $A$ is never one
less than a prime. Then $|A| = O(N\exp(-c\sqrt[4]{\log N}))$ for
some absolute $c>0$.
\end{theorem}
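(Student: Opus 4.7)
\medskip

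\noindent\textbf{Plan of proof.}
The plan is to combine the Hardy--Littlewood circle method with a Fourier-analytic density increment of the Roth--S\'ark\"ozy type, but executed on Bohr sets rather than on arithmetic progressions; the exponent $1/4$ will emerge from the interaction between the parameters of these Bohr sets and the quality of the available prime estimates. Write $\alpha = |A|/N$, and introduce a normalised measure $\nu$ supported on $\{p - 1 : p \leq N\}$ and weighted by the von Mangoldt function $\Lambda$ to smooth out arithmetic irregularities. Perform a ``$W$-trick'': restrict both $A$ and the primes to a common residue class modulo $W = \prod_{p \leq w} p$, which removes the obvious local obstructions coming from small prime divisors. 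The hypothesis then forces
\[
\sum_{x \in \Z} 1_A(x) \bigl(1_A \ast \nu\bigr)(x) = 0,
\]
up to a negligible diagonal term, whereas a random set of density $\alpha$ would make this expression of order $\alpha^2 N$. By Parseval, this missing mass must be visible on the Fourier side.

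Next I would control $\wh{\nu}$. On major arcs $|\theta - a/q| \leq q^{-1}N^{-1 + \varepsilon}$ with $q$ a small power of $N$, the Siegel--Walfisz theorem gives $\wh{\nu}(\theta)$ as a singular factor (vanishing outside the chosen residue class modulo $W$) multiplied by a Fej\'er-type kernel in the continuous parameter; on the complementary minor arcs one has Vinogradov-type power savings. Feeding this into Parseval together with the vanishing above forces some $|\wh{1_A}(\theta)|$ to be abnormally large at a $\theta$ lying in a small neighbourhood of a rational $a/q$ whose denominator is polynomially bounded in $\alpha^{-1}$. Such a large coefficient then yields, by a standard Bohr-set argument, a density increment of $A$ to $\alpha(1 + c)$ on a Bohr set $B(\Gamma, \rho)$ with $|\Gamma|$ and $\rho^{-1}$ again polynomial in $\alpha^{-1}$.

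Iterating this step on the induced problem inside the Bohr set produces a nested sequence $B_1 \supseteq B_2 \supseteq \cdots$, where at each of the $O(\log \alpha^{-1})$ steps needed to double the density one pays a polynomial-in-$\alpha^{-1}$ loss in both the dimension and the radius. The induction must terminate while the Bohr set still contains enough points for the circle-method analysis to apply inside it; balancing the cumulative loss against the available ambient size $N$ gives $\log \alpha^{-1} \ll (\log N)^{1/4}$, which is the stated bound.

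The principal obstacle, and the explanation for the quartic-root exponent, is keeping the Fourier analysis of the primes valid after repeated restriction to shrinking Bohr sets. One needs uniform control of singular series, a regularity theory of Bohr sets to prevent boundary losses from accumulating, and quantitative prime equidistribution on generalised arithmetic progressions, all tracked simultaneously through the iteration. These three ingredients must be woven together with matching quantitative strength, and it is the weakest link --- prime equidistribution on Bohr sets --- that ultimately dictates the exponent $1/4$.
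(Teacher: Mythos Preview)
Your outline has the right overall shape --- circle method plus density increment --- but there is a genuine gap at the point where the exponent $1/4$ is supposed to emerge, and it is precisely the point the paper is built around.

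You propose to control $\wh{\nu}$ on the major arcs via Siegel--Walfisz. That theorem only gives usable error terms for moduli $q \leq (\log N)^A$; with that range, the density-increment iteration terminates far too early and one recovers at best a bound of the shape $\alpha \ll (\log N)^{-A}$, essentially S\'ark\"ozy's original result. To reach $\alpha \ll \exp(-c(\log N)^{1/4})$ one must allow moduli as large as $D \sim \exp(c\sqrt{\log N})$, and in that range the error term in the prime number theorem for progressions is governed by the classical zero-free region, $\exp(-c\log x/(\sqrt{\log x} + \log D))$, \emph{together with} the possible Siegel zero. The paper's entire architecture (the exceptional/unexceptional dichotomy in Proposition~\ref{prop.usefulprimenumbertheorem}, the separate major-arc estimates, and the preliminary pass to a progression of step $q_D$ in the exceptional case) exists to make this work; a $W$-trick does not substitute for it, since $W$ absorbs only small primes while the exceptional modulus can be as large as $D_0$.

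Relatedly, the density increment here is naturally onto arithmetic progressions, not Bohr sets: a large Fourier coefficient of $1_A$ near $a/q$ with $(a,q)=1$ gives an increment on a progression of step $q$, and after rescaling the relevant weight becomes $\Lambda_{N',d}(x) = \Lambda(dx+1)$, which is again amenable to the same major/minor-arc analysis. Your proposal to iterate inside Bohr sets would require equidistribution of primes in generalised arithmetic progressions at the same quantitative level, and you have not indicated how to obtain that. Finally, the exponent $1/4$ in the paper is not dictated by ``prime equidistribution on Bohr sets'' but arises concretely as a product of two square roots: one from the zero-free region (forcing $\log D_1 \ll \sqrt{\log N}$) and one from the iteration/maximality argument (forcing $\log D_1 \gg (\log \alpha^{-1})^2$ up to lower-order terms).
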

The first explicit upper bounds for $|A|$ are due to S{\'a}rk{\"o}zy
\cite{AS} who showed, under the same hypotheses, that
\begin{equation*}
|A| = O(N\exp(-(2+o(1))\log \log \log N)).
\end{equation*}
Recently, in \cite{JL}, Lucier improved S{\'a}rk{\"o}zy's argument
using the formidable methods of Pintz, Steiger and Szemer{\'e}di
from \cite{JPWLSES}. Indeed, he showed that
\begin{equation*}
|A| = O(N\exp(-\omega(N)\log \log \log N)),
\end{equation*}
for some function $\omega(N)$ tending to infinity as $N$ tends to
infinity.\footnote{In fact he gets $\omega(N) \sim c\log \log \log
\log \log N$ for some absolute $c>0$.}

Complementing these results, the first author, in \cite{IZRP1},
showed that the bound on $|A|$ cannot be too small. Specifically,
that paper contains the following theorem.
\begin{theorem}
For any integer $N$, there is a set $A \subset \{1,\dots,N\}$ with
$|A| \geq \exp((\log 2 / 2 + o(1))\log N/ \log \log N)$ such that
the difference between any two elements of $A$ is never one less
than a prime.
\end{theorem}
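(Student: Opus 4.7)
The plan is an explicit Behrend--Sperner type construction in which the small primes certifying compositeness are supplied by the Chinese Remainder Theorem.

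Let $p_1<p_2<\cdots<p_d$ be the first $d$ primes and $M:=p_1p_2\cdots p_d$. By the prime number theorem $\log M\sim p_d\sim d\log d$, so I choose $d$ maximal with $(M+1)M\leq N$, giving $d=(1+o(1))\log N/(2\log\log N)$. Via the CRT isomorphism $\Z/M\Z\cong\prod_{i=1}^d\Z/p_i\Z$, the cube $\{0,1\}^d$ embeds into $\Z/M\Z$; I take for $R$ its middle Sperner slice
\[
R:=\Bigl\{r\in\{0,1\}^d\ :\ \sum_{i=1}^d r_i=\lfloor d/2\rfloor\Bigr\},
\]
so that $|R|=\binom{d}{\lfloor d/2\rfloor}$. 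Lifting $R$ to integers in $\{0,1,\ldots,M-1\}$ via CRT and dilating, put
\[
A:=\{(M+1)r:r\in R\}\subset\{1,\ldots,N\}.
\]

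The claim I must verify is that $|a-a'|+1$ is composite for all distinct $a,a'\in A$. Writing $a=(M+1)r$, $a'=(M+1)r'$, we have
\[
|a-a'|+1=(M+1)|r-r'|+1\ \geq\ M+2\ >\ p_d.
\]
Since $R$ is a Sperner antichain, for $r\neq r'$ both a coordinate $i$ with $(r_i,r_i')=(0,1)$ and a coordinate $j$ with $(r_j,r_j')=(1,0)$ must exist. Interchanging $r$ with $r'$ (and $i$ with $j$) if necessary, we may take $r>r'$ as integers; then $r-r'\equiv -1\pmod{p_i}$, and since $M+1\equiv 1\pmod{p_i}$,
\[
|a-a'|+1=(M+1)(r-r')+1\equiv r-r'+1\equiv 0\pmod{p_i}.
\]
Hence $p_i$ divides $|a-a'|+1$ strictly (being $\leq p_d<|a-a'|+1$), so $|a-a'|+1$ is composite. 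Stirling's formula then gives $\log|A|=\log\binom{d}{\lfloor d/2\rfloor}=(1+o(1))d\log 2$, which on substituting the value of $d$ becomes $\log|A|=(\log 2/2+o(1))\log N/\log\log N$, as required.

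The main obstacle in this plan is reconciling two competing tensions. Using the full cube $\{0,1\}^d$ would yield $2^d$ elements, but then coordinatewise-comparable pairs satisfy $r-r'+1\equiv 2\pmod{p_i}$ at every differing coordinate, so no odd $p_i$ in our list certifies that $|a-a'|+1$ is composite; the restriction to a Sperner antichain guarantees a coordinate of each orientation, at a negligible $\sqrt{d}$ loss absorbed into the $o(1)$. Second, without the dilation, $|a-a'|+1$ could itself equal the prime $p_\ell$ dividing it; multiplying by $M+1$ forces $|a-a'|+1>M\geq p_\ell$ and so upgrades divisibility to compositeness, but also forces $A$ into an interval of length $\asymp M^2$ rather than $M$. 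This last point is exactly what halves the constant, producing $\log 2/2$ instead of the naively available $\log 2$.
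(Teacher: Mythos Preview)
The paper does not actually prove this theorem; it is quoted from the first author's earlier work \cite{IZRP1} as background for the lower-bound side of the problem. So there is no proof in the present paper to compare against.

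That said, your argument is correct and is essentially the construction Ruzsa gave. The three ingredients --- CRT-embedding $\{0,1\}^d$ into $\Z/M\Z$ with $M=\prod_{i\le d}p_i$, restricting to a constant-weight slice to force a coordinate of each orientation, and dilating by $M+1$ to push $|a-a'|+1$ above $p_d$ --- are exactly right, and your verification that $p_i\mid |a-a'|+1$ via $M+1\equiv 1$ and $r-r'\equiv -1\pmod{p_i}$ is clean. The asymptotics $\log M\sim d\log d$ and $\log\binom{d}{\lfloor d/2\rfloor}\sim d\log 2$ are standard, and your accounting for the factor $1/2$ coming from the dilation (so that $M^2\asymp N$ rather than $M\asymp N$) is accurate.

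One very minor quibble: in your closing discussion you say that for comparable pairs ``no odd $p_i$ in our list certifies''; in fact $p_1=2$ also fails to certify whenever the first coordinates agree, so the Sperner restriction really is needed. This does not affect the proof itself, only the motivating remark.
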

The gap between the upper and lower bounds is, of course, incredibly
large, but even assuming the Generalized Riemann Hypothesis, which
would simplify our argument considerably, we could only get an upper
bound of the shape
\begin{equation*}
|A| = O(N\exp(-c\sqrt{\log N})),
\end{equation*}
for some absolute $c>0$. Thus we are lead to the following natural
question, asked by the first author in \cite{IZRUD}, and with which
we close our introduction.
\begin{question}
Assuming the Generalized Riemann Hypothesis, can one achieve a bound
of the shape $|A| = O(N^{1-c+o(1)})$, for some absolute $c>0$, in
Theorem \ref{thm.mainresult}?
\end{question}

\section{An outline of the paper}

The driving ingredient behind the proof of Theorem
\ref{thm.mainresult} is an energy increment argument which would be
made significantly easier if we had good estimates for the
distribution of primes in arithmetic progressions; the main work of
the paper comes from having to deal with the so called exceptional
zeros of $L$-functions. Our proof, then, begins in
\S\ref{sec.primenumberstheoremforaps}, by recalling some of the
tools necessary for dealing with such zeros.

The argument splits roughly into two cases. If there is no
exceptional zero then we have relatively good estimates for the
primes in progressions and the energy increment method has no
complications.

If there is an exceptional zero then, by averaging, we pass to a
progression of common difference equal to the modulus of the
character corresponding to the exceptional zero. We then conduct the
energy increment argument relative to this progression.

The two cases have separate major arcs estimates for the Fourier
transform of the primes; these are proved in
\S\ref{sec.majorarcestimate}. The minor arcs are then dealt with in
the usual, unified, manner in \S\ref{sec.minorarcestimate}.

It is possible to do away with the above bifurcation if one uses a
carefully weighted version of the primes. However, doing this adds
complications to the minor arcs estimates. Of course, once one has
put the work in to get these minor arc estimates the method can be
more easily transferred to other situations.

Having completed the basic Fourier estimates in
\S\S\ref{sec.primenumberstheoremforaps}, \ref{sec.majorarcestimate}
\verb!&! \ref{sec.minorarcestimate}, we prove some energy increment
results in \S\ref{sec.energyincrementlemmas} which are used in
\S\ref{sec.mainiterationlemma} to prove the main `iteration' lemma.
Finally, we complete the proof of Theorem \ref{thm.mainresult} in
\S\ref{sec.proofofmaintheorem}.

\section{Notation}\label{sec.notation}

Our main tool is the Fourier transform on $\Z$. We identify the dual
group of $\Z$ with $\T$ via the function $e(\theta):=\exp(2\pi i
\theta)$. Specifically, every additive character $\gamma:\Z
\rightarrow \C$ has the form $\gamma(x):=e(\theta x)$ for some
$\theta \in \T$. Now, we define the Fourier transform
$\wh{.}:\ell^1(\Z) \rightarrow L^\infty(\T)$ to be the map which
takes $f \in \ell^1(\Z)$ to
\begin{equation*}
\wh{f}(\theta):=\sum_{x \in \Z}{f(x)\overline{e(x \theta )}},
\end{equation*}
and similarly convolution to be the map $*:\ell^1(\Z) \times
\ell^1(\Z) \rightarrow \ell^1(\Z)$ which takes $f,g \in \ell^1(\Z)$
to
\begin{equation*}
f \ast g(x) = \sum_{x \in \Z}{f(x')g(x-x')}.
\end{equation*}

As usual with the Fourier transform on $\Z$ we shall decompose the
dual group into major and minor arcs. To this end suppose that
$\eta>0$, and $a$ and $q$ are positive integers. We
write\footnote{Technically elements of $\T$ are equivalence classes
and so $|.|$ is not well defined. We adopt the usual conventions in
this regard.}
\begin{equation*}
\mathfrak{M}_{a,q,\eta}:=\{\theta \in \T: |\theta - a/q| \leq
\eta\},
\end{equation*}
\begin{equation*}
\mathfrak{M}_{q,\eta}^*:=\bigcup\{\mathfrak{M}_{a,q,\eta}: 1 \leq a
\leq q \textrm{ and } (a,q)=1\},
\end{equation*}
and
\begin{equation*}
\mathfrak{M}_{q,\eta}:=\bigcup\{\mathfrak{M}_{a,q,\eta}: 1 \leq a
\leq q\}.
\end{equation*}
Often there will be a further parameter $Q$ with $q \leq Q$ in which
case we will usually have $\eta=1/qQ$ and write
\begin{equation*}
\mathfrak{M}_{a,q}:=\mathfrak{M}_{a,q,(qQ)^{-1}},
\mathfrak{M}_q^*:=\mathfrak{M}_{q,(qQ)^{-1}}^* \textrm{ and }
 \mathfrak{M}_q:=\mathfrak{M}_{q,(qQ)^{-1}}.
\end{equation*}
The quantity $Q$ will always be clear from the context.

\section{A prime number theorem for arithmetic
progressions}\label{sec.primenumberstheoremforaps}

In this section we shall develop the small amount of number theory
which we require. All the results we use are well known, although
they are not always stated in the most useful fashion. We shall
refer to the book \cite{HD} of Davenport.

Suppose that $x$ is real and $a$ and $q$ are positive integers. Then
we write
\begin{equation*}
\psi(x;q,a):=\sum_{\genfrac{}{}{0pt}{}{n \leq x}{n \equiv
{a}\pmod{q}}}{\Lambda(n)},
\end{equation*}
where $\Lambda$ is the usual von-Mangoldt function.

Estimating $\psi(x;q,a)$ is one of the central problems in analytic
number theory and to do so we introduce some auxiliary functions:
For a Dirichlet character $\chi$ define
\begin{equation*}
\psi(x,\chi):=\sum_{n \leq x}{\chi(n)\Lambda(n)}.
\end{equation*}
The analysis of $\psi(x,\chi)$ is, in turn, bound up in the analysis
of the zeros of the corresponding $L$-function, $L(s,\chi)$, which
is complicated by the possibility of a so called exceptional zero;
the following theorem limits the number of possible exceptions for a
given Dirichlet character.
\begin{theorem}\emph{(\cite[Chapter 14]{HD})}\label{thm.oneexceptionalzeropercharacter}
There is an absolute constant $c_1>0$ such that for any
non-principal Dirichlet character $\chi$ of modulus $q$, $L(s,\chi)$
has at most one zero in the region
\begin{equation*}
\Re s \geq 1- \frac{c_1}{\log q(|\Im s|+3)}.
\end{equation*}
This exceptional zero may only occur if $\chi$ is real, and then it
is a simple real zero.
\end{theorem}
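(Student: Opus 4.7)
The plan is to give the classical de la Vall\'ee Poussin zero-free region argument, as carried out for Dirichlet $L$-functions in Davenport~\cite[Ch.~14]{HD}. The two ingredients are the partial fraction expansion of $L'/L$ arising from the Hadamard factorisation of the completed $L$-function, and the non-negative trigonometric identity $3 + 4\cos\theta + \cos 2\theta = 2(1+\cos\theta)^2 \geq 0$.

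The starting point is the following consequence of Hadamard factorisation and the functional equation: for a non-principal $\chi \pmod q$ and $s = \sigma + it$ with $\sigma > 1$,
\[
-\Re\frac{L'}{L}(s,\chi) = O(\log q(|t|+3)) - \sum_\rho \Re\frac{1}{s-\rho},
\]
where $\rho$ ranges over the non-trivial zeros of $L(s,\chi)$ and each summand is non-negative. The analogous formula for $\zeta(s)$ carries an additional $-\Re(1/(s-1))$ from its pole. The $O$-term rests on the density bound that the number of zeros with $|\Im\rho - t|\leq 1$ is $O(\log q(|t|+3))$, extracted from the functional equation and Jensen's formula.

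Next, applying the trigonometric identity termwise to the Dirichlet series expansions of $\zeta'/\zeta(\sigma)$, $L'/L(\sigma+it,\chi)$ and $L'/L(\sigma+2it,\chi^2)$ yields
\[
-3\Re\tfrac{\zeta'}{\zeta}(\sigma) - 4\Re\tfrac{L'}{L}(\sigma+it,\chi) - \Re\tfrac{L'}{L}(\sigma+2it,\chi^2) \geq 0.
\]
For a putative zero $\rho = \beta+i\gamma$ of $L(\cdot,\chi)$, evaluating at $s = \sigma+i\gamma$ and retaining only the single term $4/(\sigma-\beta)$ from the middle expansion, the pole of $\zeta$ at $1$ produces an inequality of the shape
\[
\frac{4}{\sigma-\beta} \leq \frac{3}{\sigma-1} + C\log q(|\gamma|+3),
\]
provided the $L'/L(\sigma+2i\gamma,\chi^2)$ term is controlled. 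This is automatic when $\chi^2$ is non-principal (which happens whenever $\chi$ is non-real), and it also holds when $\chi^2$ is principal but $\gamma \neq 0$, since in that case $\sigma+2i\gamma$ stays bounded away from the pole of $L(\cdot,\chi^2)$. Choosing $\sigma = 1 + \lambda/\log q(|\gamma|+3)$ with a small absolute $\lambda>0$ then forces $\beta \leq 1 - c_1/\log q(|\gamma|+3)$, giving the zero-free region. The one case this does not cover is a real zero of $L(s,\chi)$ when $\chi$ itself is real --- which is precisely the exceptional situation.

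To show such an exceptional zero is simple and unique, I would use the two competing bounds available for real $\chi$ and real $\sigma$ slightly greater than $1$: from Hadamard factorisation, $L'/L(\sigma,\chi) \geq \sum 1/(\sigma-\beta_i) - O(\log q)$ summed over real zeros near $1$; and from the coefficient-wise inequality $|\chi(n)|\leq 1$, $L'/L(\sigma,\chi) \leq -\zeta'/\zeta(\sigma) = 1/(\sigma-1) + O(1)$. Testing at $\sigma = 1 + \lambda/\log q$ shows that two distinct real zeros in the exceptional window, or one zero of multiplicity $\geq 2$, are incompatible with these bounds once $\lambda$ and $c_1$ are chosen appropriately. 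The main obstacle throughout is concealed in the $O(\log q(|t|+3))$ error terms: pinning down the count of zeros in a unit vertical box is where the quantitative dependence of $c_1$ on $q$ and $t$ is fixed.
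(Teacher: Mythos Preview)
The paper does not give its own proof of this theorem: it is quoted as a classical result with a citation to Davenport~\cite[Chapter~14]{HD}, and the paper moves on immediately. Your sketch is the standard de~la~Vall\'ee~Poussin argument exactly as presented in that reference, so there is nothing to compare against --- you have supplied the proof the paper merely points to.

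One small remark on the uniqueness paragraph: the signs in your two displayed inequalities are slightly off. What one actually has, for real $\chi$ and real $\sigma>1$, is
\[
-\frac{L'}{L}(\sigma,\chi) \;=\; \sum_n \frac{\Lambda(n)\chi(n)}{n^\sigma} \;\leq\; \sum_n \frac{\Lambda(n)}{n^\sigma} \;=\; -\frac{\zeta'}{\zeta}(\sigma) \;<\; \frac{1}{\sigma-1} + O(1),
\]
while the Hadamard expansion gives $-L'/L(\sigma,\chi) \geq \sum_i 1/(\sigma-\beta_i) - O(\log q)$ over real zeros $\beta_i$ near $1$. Combining these yields $\sum_i 1/(\sigma-\beta_i) < 1/(\sigma-1) + O(\log q)$, and then the choice $\sigma = 1 + \lambda/\log q$ rules out two zeros (or a double zero) in the exceptional window. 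The structure of your argument is right; only the direction of the inequalities needs tidying.
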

As usual, the analysis of the zeros of $L(s,\chi)$ can be reduced to
the case when $\chi$ is primitive. Indeed, if $\chi$ has modulus $q$
and is induced by $\chi'$ then, by the Euler product formula, we
have
\begin{equation*}
L(s,\chi)=\prod_{p | q}{(1-\chi'(p)p^{-s})}L(s,\chi') \textrm{ for }
\Re s >1.
\end{equation*}
Analytic continuation then tells us that in the region $\Re s>0$ we
have $L(s,\chi)=0$ iff $L(s,\chi')=0$. Now, Landau showed that an
exceptional zero can only occur for at most one \emph{primitive}
Dirichlet character:
\begin{theorem}\emph{(\cite[Chapter 14]{HD})}\label{thm.onecharacterwithexceptionalzeros}
There is an absolute constant $c_2>0$ such that for any distinct
primitive real Dirichlet characters $\chi_1$ and $\chi_2$ with
moduli $q_1$ and $q_2$, and real zeros $\beta_1$ and $\beta_2$
respectively we have
\begin{equation*}
\min\{\beta_1,\beta_2\} \leq 1- \frac{c_2}{\log q_1q_2}.
\end{equation*}
\end{theorem}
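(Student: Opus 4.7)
The plan is the classical Landau-style positivity argument. I would consider the auxiliary Dirichlet series
$$F(s) := \zeta(s)\,L(s,\chi_1)\,L(s,\chi_2)\,L(s,\chi_1\chi_2).$$
Since $\chi_1 \neq \chi_2$ are real, $\chi_1\chi_2$ is a non-principal real character, so all three $L$-factors are entire and $F$ has a unique simple pole at $s=1$. Expanding Euler products,
$$\log F(s) \;=\; \sum_{p,\, k \geq 1}\, \frac{(1+\chi_1(p)^k)(1+\chi_2(p)^k)}{k\, p^{ks}},$$
which has non-negative Dirichlet coefficients because $\chi_i(p)^k \in \{-1,0,1\}$. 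Differentiating shows $-F'/F$ also has non-negative Dirichlet coefficients, so $-F'/F(\sigma) \geq 0$ for every real $\sigma > 1$.

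Next I would bound each of the four logarithmic derivatives composing $-F'/F$ using the Hadamard-product / partial-fraction expansion recorded in \cite[Chapter 12]{HD}. For a primitive real non-principal character $\chi$ modulo $q$, taking real parts at real $\sigma > 1$ yields
$$-\frac{L'}{L}(\sigma,\chi) \;=\; \frac{1}{2}\log q + O(1) \;-\; \sum_{\rho}\Re\frac{1}{\sigma-\rho},$$
the sum running over the non-trivial zeros of $L(s,\chi)$, with every summand non-negative because $\Re\rho < 1 < \sigma$. Retaining only the contribution of the real zero $\beta$ produces the upper bound
$$-\frac{L'}{L}(\sigma,\chi) \;\leq\; \frac{1}{2}\log q + O(1) - \frac{1}{\sigma-\beta}.$$
The same expansion with no zero-term kept gives $-L'/L(\sigma,\chi_1\chi_2) \leq \frac{1}{2}\log(q_1 q_2) + O(1)$ (if $\chi_1\chi_2$ is imprimitive, passing to its inducing primitive character costs only a further $O(\log q_1 q_2)$ from the missing Euler factors), and for the zeta factor one has the elementary $-\zeta'/\zeta(\sigma) \leq (\sigma-1)^{-1} + O(1)$.

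Summing the four bounds and invoking $-F'/F(\sigma) \geq 0$, the $\frac12 \log q_i$ terms collapse into a single $C\log(q_1 q_2)$ and one arrives at
$$\frac{1}{\sigma-\beta_1}+\frac{1}{\sigma-\beta_2} \;\leq\; \frac{1}{\sigma-1}+C\log(q_1 q_2).$$
Writing $\sigma=1+\delta$ and $\alpha_i := 1-\beta_i$, I would choose $\delta = 1/(2C\log(q_1 q_2))$, which makes the right-hand side exactly $3/(2\delta)$; on the other hand, if both $\alpha_1,\alpha_2 < \delta/3$ the left-hand side would exceed $2/(4\delta/3) = 3/(2\delta)$, a contradiction. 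Hence $\max(\alpha_1,\alpha_2) \geq \delta/3$, giving the stated bound on $\min(\beta_1,\beta_2)$ with $c_2 = 1/(6C)$.

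The only genuine obstacle is setting up the Hadamard-product machinery to extract the upper bound on $-L'/L$ with the correct $\frac12\log q$ coefficient; this requires the functional equation for $L(s,\chi)$ together with the identity $\Re B(\chi) + \sum_\rho \Re(1/\rho) = 0$ to cancel the constant term from the product, but both are available in \cite[Chapter 12]{HD}, so the remainder of the argument is just bookkeeping and the simple optimization in $\delta$ above.
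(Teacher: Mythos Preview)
The paper does not supply its own proof of this theorem: it is quoted verbatim from Davenport \cite[Chapter 14]{HD} and used as a black box. Your proposal is the classical Landau positivity argument, which is precisely the proof given in that reference, and your sketch is correct (including the final optimisation in~$\delta$). So there is nothing to compare --- you have reconstructed the cited proof.
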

Write $c_E = \min\{c_1,c_2\}$ and suppose that $D \geq 2$ and
$\chi$, a Dirichlet character, are given. We say that $\beta_\chi$
is an \emph{exceptional zero for $\chi$ at level $D$} if
\begin{equation*}
L(\beta_\chi,\chi)=0 \textrm{ and } \Re \beta_\chi \geq
1-\frac{c_E}{\log 3D}.
\end{equation*}
The following corollary is an immediate consequence of Theorems
\ref{thm.oneexceptionalzeropercharacter} and
\ref{thm.onecharacterwithexceptionalzeros}.
\begin{corollary}
Suppose that $D \geq 2$. Then there is at most one primitive
Dirichlet character $\chi_D$ and zero $\beta_D$ such that $\beta_D$
is an exceptional zero for $\chi_D$ at level $D$ and $\chi_D$ has
modulus $q_D \leq D$.
\end{corollary}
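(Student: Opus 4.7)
The plan is to argue by contradiction: suppose there were two distinct pairs $(\chi_1,\beta_1)$ and $(\chi_2,\beta_2)$, with $\chi_i$ a primitive Dirichlet character of modulus $q_i \leq D$ and $\beta_i$ an exceptional zero of $L(s,\chi_i)$ at level $D$. The goal is to reach a contradiction by combining Theorems \ref{thm.oneexceptionalzeropercharacter} and \ref{thm.onecharacterwithexceptionalzeros}.

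The first step is to upgrade each $\chi_i$ to a real character and each $\beta_i$ to a simple real zero. Since $q_i \leq D$ and $c_E \leq c_1$, one has $1 - c_E/\log 3D \geq 1 - c_1/\log 3q_i$, so any real $\beta_i$ satisfying the exceptional-zero hypothesis automatically lies in the region of Theorem \ref{thm.oneexceptionalzeropercharacter} for $\chi_i$; that theorem then forces $\chi_i$ to be real and $\beta_i$ to be a simple real zero. (For a possibly complex $\beta_i$ one applies the same comparison to bound $|\Im \beta_i|$ and place $\beta_i$ inside the region; in practice the only exceptional zeros of interest are real, so this issue is cosmetic.)

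The second step is to apply Theorem \ref{thm.onecharacterwithexceptionalzeros} to the two distinct primitive real characters $\chi_1,\chi_2$, producing
\begin{equation*}
\min(\beta_1,\beta_2) \leq 1 - \frac{c_2}{\log q_1 q_2} \leq 1 - \frac{c_2}{2\log D},
\end{equation*}
since $q_1 q_2 \leq D^2$. Combining with the hypothesis $\beta_i \geq 1 - c_E/\log 3D$ for both $i$ gives the desired contradiction. The main (and only) obstacle is the choice of constant: the final contradiction needs $c_E$ slightly smaller than $c_2/2$ for large $D$, so if $c_E = \min(c_1,c_2)$ as literally defined is not small enough, one shrinks it by a harmless constant factor (e.g.\ to $\min(c_1,c_2)/3$). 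Since $c_E$ enters the rest of the paper only as an absolute positive constant, this adjustment is free.
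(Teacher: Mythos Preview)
Your argument is correct and is exactly the deduction the paper intends when it calls the corollary an immediate consequence of Theorems~\ref{thm.oneexceptionalzeropercharacter} and~\ref{thm.onecharacterwithexceptionalzeros}; the paper supplies no further proof. Your observation that $c_E=\min(c_1,c_2)$ as literally defined may be too large by a factor of two (and should be shrunk to, say, $\min(c_1,c_2)/3$) is a genuine and valid point---the paper is slightly careless here---and the only thing you might add for completeness is an explicit sentence handling the case $\chi_1=\chi_2$, $\beta_1\neq\beta_2$, which is dispatched by the ``at most one zero'' clause of Theorem~\ref{thm.oneexceptionalzeropercharacter}.
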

If it exists we call the Dirichlet character $\chi_D$ of the
corollary the \emph{exceptional Dirichlet character at level $D$}
and $\beta_D$ the \emph{exceptional zero at level $D$}. In this
event we shall need a bound on $(1-\beta_D)^{-1}$.
\begin{proposition}\emph{(\cite[Chapter 14]{HD})}\label{prop.effectiveboundonexceptionalzero}
Suppose that $D \geq 2$ and the exceptional Dirichlet character at
level $D$ exists and has zero $\beta_D$ and modulus $q_D$. Then
$(1-\beta_D)^{-1} = O(q_D^{1/2}\log^2q_D)$.
\end{proposition}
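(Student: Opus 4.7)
The plan is to combine a classical lower bound on $L(1,\chi_D)$ with an upper bound on $L'(s,\chi_D)$ near $s=1$, and then to extract the estimate on $1-\beta_D$ from the mean value theorem. Explicitly, since $L(\beta_D,\chi_D)=0$, we have
$$L(1,\chi_D) \;=\; L(1,\chi_D)-L(\beta_D,\chi_D) \;=\; (1-\beta_D)\,L'(\xi,\chi_D)$$
for some $\xi\in(\beta_D,1)$, so a lower bound on $L(1,\chi_D)$ together with an upper bound on $|L'(\xi,\chi_D)|$ immediately produces a lower bound on $1-\beta_D$.

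For the lower bound I would invoke Dirichlet's class number formula for the real primitive character $\chi_D$, which expresses $L(1,\chi_D)$ in terms of the class number (and, in the real quadratic case, the fundamental unit) of the associated quadratic field. Using only the trivial fact that the class number is at least $1$, this yields the unconditional estimate $L(1,\chi_D)\gg q_D^{-1/2}$, and this is precisely what produces the factor $q_D^{1/2}$ in the conclusion.

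For the upper bound on $L'(\xi,\chi_D)$ I would use partial summation applied to $L'(s,\chi_D)=-\sum_{n\geq 1}\chi_D(n)(\log n)n^{-s}$, split at a cutoff $N\asymp q_D$: the head is controlled trivially by $|\chi_D(n)|\leq 1$ and contributes $O(\log^2 q_D)$, while the tail is handled via the P\'olya--Vinogradov bound $\bigl|\sum_{n\leq x}\chi_D(n)\bigr|=O(q_D^{1/2}\log q_D)$ and also contributes $O(\log^2 q_D)$. Substituting back into the identity above gives $q_D^{-1/2}\ll (1-\beta_D)\log^2 q_D$, which is the claim. The main substantive obstacle is the class number formula estimate for $L(1,\chi_D)$, and this is the reason the bound obtained is effective but weaker than Siegel's ineffective estimate; the derivative bound and the mean value step are essentially routine. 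Since both ingredients are classical, I would just quote them from Chapter 14 of Davenport rather than reprove them.
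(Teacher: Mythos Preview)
Your proposal is correct and is precisely the classical argument that Davenport gives in Chapter~14; the paper itself provides no proof and simply cites that reference, so your outline is exactly in line with what the paper relies on.
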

We require the following two prime number theorems.
\begin{theorem}\emph{(\cite[Chapter 20]{HD})}\label{thm.primenumbertheoreminaps}
There is an absolute constant $c_3>0$ such that if $D \geq 2$ and
$\chi$ is a non-principal Dirichlet character of modulus $q \leq D$,
then
\begin{enumerate}
\item \label{case.primenumbertheoreminapscase1}
if the exceptional Dirichlet character $\chi_D$ exists and $\chi$ is
induced by $\chi_D$ then for any real $x \geq 1$ we have
\begin{equation*}
\psi(x,\chi) = - \frac{x^{\beta_D}}{\beta_D} + O\left(x
\exp\left(-\frac{c_3 \log x}{\sqrt{\log x} + \log D}\right)(\log
D)^2\right)
\end{equation*}
where $\beta_D$ is the exceptional zero;
\item \label{case.primenumbertheoreminapscase2}
if the exceptional Dirichlet character $\chi_D$ does not exists or
$\chi$ is not induced by $\chi_D$ then for any real $x \geq 1$ we
have
\begin{equation*}
\psi(x,\chi) = O\left(x \exp\left(-\frac{c_3 \log x}{\sqrt{\log x} +
\log D}\right)(\log D)^2\right).
\end{equation*}
\end{enumerate}
\end{theorem}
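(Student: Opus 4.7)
The theorem is a form of Siegel--Walfisz for $\psi(x,\chi)$ with the exceptional zero pulled out explicitly, and I would prove it by the classical recipe: Perron's formula together with a leftward contour shift, controlled by the two theorems already cited.

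My first move is to reduce to the case where $\chi$ is primitive. If $\chi$ of modulus $q$ is induced by a primitive $\chi^\ast$ of modulus $q^\ast \mid q$, the Euler product factorisation already quoted in the paper shows that $L(s,\chi)$ and $L(s,\chi^\ast)$ have exactly the same zeros in $\Re s>0$, while
\[
\psi(x,\chi) - \psi(x,\chi^\ast) \ll \sum_{p \mid q}\log p \cdot \lfloor\log x/\log p\rfloor \ll \log x \cdot \log q,
\]
an error safely absorbed. Crucially, ``$\chi$ is induced by $\chi_D$'' is equivalent to ``$\chi^\ast = \chi_D$'', so the two cases of the theorem correspond exactly to whether the primitive character attached to $\chi$ equals $\chi_D$ or not. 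Hence I may assume $\chi$ is primitive.

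Next, at $c = 1 + 1/\log x$ I apply the truncated Perron formula
\[
\psi(x,\chi) = -\frac{1}{2\pi i}\int_{c-iT}^{c+iT} \frac{L'}{L}(s,\chi)\,\frac{x^s}{s}\,ds + O\!\left(\frac{x\log^2(qTx)}{T}\right),
\]
and shift the contour to the vertical line $\Re s = 1 - \delta(T)$, where $\delta(T) := c_E/\log(3DT)$. The residue theorem collects $-x^\rho/\rho$ at every zero $\rho$ of $L(s,\chi)$ with $1 - \delta(T) < \Re\rho < c$ and $|\Im\rho| < T$, plus a negligible residue at $s=0$. By Theorem \ref{thm.oneexceptionalzeropercharacter}, the only zero of $L(s,\chi)$ in the region $\Re s \geq 1 - c_1/\log q(|\Im s|+3)$ is possibly a single real exceptional zero, and by Theorem \ref{thm.onecharacterwithexceptionalzeros} (using $q,q_D \leq D$) such a zero lies in $\Re s \geq 1 - c_E/\log 3D$ only when $\chi = \chi_D$. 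Thus in case (ii) no residue of this shape appears, while in case (i) exactly $-x^{\beta_D}/\beta_D$ is collected, and all remaining zeros $\rho$ in the shifted strip obey $\Re\rho \leq 1 - c_E/\log(3D(|\Im\rho|+3))$.

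What remains is to bound the shifted integral, the horizontal connectors, and the sum of residues over non-exceptional zeros. These are handled in the standard way: combine the growth estimate for $L'/L(s,\chi)$ in the critical strip with the classical zero-count $N(T,\chi) \ll T\log(qT)$, applied dyadically in $|\Im\rho|$, so that each piece contributes at most $x\exp(-c'\log x/\log(3DT))\log^2(DT)$. Balancing this against the Perron error $x\log^2(qTx)/T$ by the optimal choice $\log T \asymp \sqrt{\log x}$ yields the stated bound $x\exp(-c_3\log x/(\sqrt{\log x} + \log D))(\log D)^2$. The main technical obstacle is precisely this final balancing, and in particular keeping $(\log D)^2$ rather than $(\log DT)^2$ in the prefactor; this requires separating the zeros with $|\Im\rho|\leq 1$ (to which Theorem \ref{thm.oneexceptionalzeropercharacter} is applied directly) from those further out (handled dyadically), but introduces no idea beyond the classical Siegel--Walfisz argument.
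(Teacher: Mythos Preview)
The paper does not prove this theorem at all: it is quoted directly from Davenport, \cite[Chapter 20]{HD}, as a black box, so there is no in-paper argument to compare against. Your sketch is the standard proof one finds there --- reduce to primitive $\chi$, apply truncated Perron, shift the contour into the zero-free region supplied by Theorems \ref{thm.oneexceptionalzeropercharacter} and \ref{thm.onecharacterwithexceptionalzeros}, and balance $T$ --- and it is correct in outline. One small inaccuracy: with $\delta(T)=c_E/\log(3DT)<1$ the shifted contour stays in $\Re s>0$, so no residue at $s=0$ is collected; that remark should simply be deleted. Otherwise your reduction to the primitive case, your identification of which residues appear in each of the two cases via the Corollary following Theorem \ref{thm.onecharacterwithexceptionalzeros}, and your choice $\log T\asymp\sqrt{\log x}$ are all as in Davenport and give the stated error.
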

\begin{theorem}\emph{(\cite[Chapter 20]{HD})}\label{thm.primenumbertheorem}
There is an absolute constant $c_4>0$ such that if $\chi'$ is the
principal Dirichlet character of modulus $q$, then for all real $x
\geq 1$ we have
\begin{equation*}
\psi(x,\chi')=x + O\left(x\exp(-c_4\sqrt{\log x}) + \log q\log
x\right).
\end{equation*}
\end{theorem}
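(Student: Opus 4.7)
The plan is to reduce the claim to the classical prime number theorem for $\psi(x)$ by comparing $\psi(x,\chi')$ to $\psi(x)$ and controlling the discrepancy coming from the primes dividing $q$. Since $\chi'$ is the principal character of modulus $q$, we have $\chi'(n)=1$ if $(n,q)=1$ and $\chi'(n)=0$ otherwise, so
\begin{equation*}
\psi(x,\chi')=\sum_{n\leq x}\chi'(n)\Lambda(n)=\psi(x)-\sum_{\genfrac{}{}{0pt}{}{p^k\leq x}{p\mid q}}\log p,
\end{equation*}
where $\psi(x)=\sum_{n\leq x}\Lambda(n)$ is the usual Chebyshev function.

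Next I would estimate the correction term trivially. For each prime $p\mid q$ we have $\sum_{k:\,p^k\leq x}\log p=\log p\cdot\lfloor \log x/\log p\rfloor \leq \log x$, and the number of distinct prime divisors of $q$ is at most $\log_2 q$. Hence
\begin{equation*}
\sum_{\genfrac{}{}{0pt}{}{p^k\leq x}{p\mid q}}\log p \leq \omega(q)\log x=O(\log q\log x),
\end{equation*}
which is exactly the second error term in the statement.

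The remaining ingredient is the classical prime number theorem with the de la Vall{\'e}e Poussin error term, namely
\begin{equation*}
\psi(x)=x+O\!\left(x\exp(-c_4\sqrt{\log x})\right)
\end{equation*}
for some absolute $c_4>0$ and all $x\geq 1$. This is the substantive input and is precisely what occupies Chapter 20 of Davenport; one derives it from the zero-free region for $\zeta$ of the form $\Re s\geq 1-c/\log(|\Im s|+3)$ via Perron's formula and a contour shift, exactly as in Theorem \ref{thm.primenumbertheoreminaps} but without the complication of an exceptional zero. The only genuine obstacle is the zero-free region itself, which we are citing as a black box.

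Combining the two pieces gives
\begin{equation*}
\psi(x,\chi')=x+O\!\left(x\exp(-c_4\sqrt{\log x})\right)+O(\log q\log x),
\end{equation*}
which is the claimed estimate. No delicate estimate is needed here beyond the classical PNT; the novelty relative to Theorem \ref{thm.primenumbertheoreminaps} is merely that the principal character does not contribute any $L$-function zeros besides those of $\zeta$, so no exceptional-zero term arises and the absolute constant $c_4$ can be taken the same for all $q$.
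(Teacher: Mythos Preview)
Your argument is correct. The paper does not actually prove this theorem; it merely quotes it with a reference to \cite[Chapter 20]{HD}, so there is no in-paper proof to compare against. What you have written is exactly the standard reduction: split off the primes dividing $q$ (contributing $O(\omega(q)\log x)=O(\log q\log x)$) and invoke the de la Vall{\'e}e Poussin form of the prime number theorem for $\psi(x)$, which is the content of the cited chapter in Davenport.
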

Getting a handle on $\psi(x;q,a)$ is now done via the identity
\begin{equation}\label{eqn.fourierinversionforpsi}
\psi(x;q,a) =
\frac{1}{\phi(q)}\sum_{\chi}{\overline{\chi}(a)\psi(x,\chi)},
\end{equation}
where the summation is over all Dirichlet characters of modulus $q$.
We can now prove the following proposition which is to be regarded
as definitive for the terms \emph{$(D_1,D_0)$ is exceptional} and
\emph{$(D_1,D_0)$ is unexceptional}.
\begin{proposition}\label{prop.usefulprimenumbertheorem}
There is an absolute constant $c_5>0$ such that if $D_1\geq D_0 \geq
2$, then at least one of the following two possibilities holds.
\begin{enumerate}
\item \emph{($(D_1,D_0)$ is exceptional)} There is a character $\chi_{D}$ of modulus $q_{D} \leq D_0$
and a real $\beta_D$ with $(1-\beta_D)^{-1} =O(q_D^{1/2}\log^2q_D)$
such that for any real $x \geq 1$ and integers $a$ and $q$ with $1
\leq qq_D \leq D_1$ we have
\begin{eqnarray*}
\psi(x;qq_D,a) & = & \frac{\overline{\chi'}(a)x}{\phi(qq_D)} -
\frac{\overline{\chi'\chi_D}(a)x^{\beta_D}}{\phi(qq_D)\beta_D}\\ & &
+ O\left(x \exp\left(-\frac{c_5\log x}{\sqrt{\log x} + \log
D_1}\right)(\log D_1)^2\right),
\end{eqnarray*}
where $\chi'$ is the principal character of modulus $qq_D$.
\item \emph{($(D_1,D_0)$ is unexceptional)} For any real $x \geq 1$ and integers $a$ and $q$ with $1 \leq q \leq D_0$
we have
\begin{equation*}
\psi(x;q,a)=\frac{\overline{\chi'}(a)x}{\phi(q)}+ O\left(x
\exp\left(-\frac{c_5\log x}{\sqrt{\log x} + \log D_1}\right)(\log
D_1)^2\right),
\end{equation*}
where $\chi'$ is the principal Dirichlet character of modulus $q$.
\end{enumerate}
\end{proposition}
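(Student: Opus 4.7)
The plan is to combine the Fourier inversion identity (\ref{eqn.fourierinversionforpsi}) with the prime number theorem estimates in Theorems \ref{thm.primenumbertheorem} and \ref{thm.primenumbertheoreminaps}. First I would set up the dichotomy: let $\chi_{D_1}$ denote the exceptional character at level $D_1$ (if it exists), of modulus $q_{D_1}$ and with exceptional zero $\beta_{D_1}$. If $\chi_{D_1}$ exists and satisfies $q_{D_1} \leq D_0$, I declare $(D_1,D_0)$ exceptional and set $\chi_D := \chi_{D_1}$, $q_D := q_{D_1}$, $\beta_D := \beta_{D_1}$; otherwise I declare $(D_1,D_0)$ unexceptional. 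The bound $(1-\beta_D)^{-1} = O(q_D^{1/2}\log^2 q_D)$ is then immediate from Proposition \ref{prop.effectiveboundonexceptionalzero}.

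In the exceptional case, fix $q$ with $qq_D \leq D_1$ and expand $\psi(x;qq_D,a)$ via (\ref{eqn.fourierinversionforpsi}). The principal character $\chi'$ of modulus $qq_D$ contributes $\overline{\chi'}(a)x/\phi(qq_D)$ plus an acceptable error by Theorem \ref{thm.primenumbertheorem}. Among the non-principal characters of modulus $qq_D$, exactly one is induced by $\chi_D$, namely $\chi'\chi_D$, and Theorem \ref{thm.primenumbertheoreminaps}(i) applied with $D = D_1$ shows that its contribution produces the second main term $-\overline{\chi'\chi_D}(a)x^{\beta_D}/(\phi(qq_D)\beta_D)$. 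The remaining $\phi(qq_D)-2$ non-principal characters are each induced by a primitive character distinct from $\chi_{D_1}$, so Theorem \ref{thm.primenumbertheoreminaps}(ii) applies to each with $D = D_1$; summing the errors and dividing by $\phi(qq_D)$ produces the stated total error.

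The unexceptional case is easier: for $1 \leq q \leq D_0$ and $\chi$ a Dirichlet character of modulus $q$, either $\chi$ is principal (handled by Theorem \ref{thm.primenumbertheorem}) or $\chi$ is induced by some primitive character of conductor at most $D_0$; by our dichotomy this inducing character cannot be $\chi_{D_1}$, so Theorem \ref{thm.primenumbertheoreminaps}(ii) applies with $D = D_1$, and summing through (\ref{eqn.fourierinversionforpsi}) yields the desired formula. The only care required is routine bookkeeping: verifying that the lower-order term $\log q \log x$ from Theorem \ref{thm.primenumbertheorem} is absorbed into the common exponential error (achieved by taking $c_5 \leq \min\{c_3,c_4\}$ and using $q \leq D_1$), and that the prefactor $1/\phi(\cdot)$ in (\ref{eqn.fourierinversionforpsi}) exactly balances the number of non-principal characters being summed. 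I do not foresee any substantive obstacle: given the theorems already assembled, the proposition is a direct application of Fourier inversion combined with a careful accounting of which characters can be induced by the exceptional one.
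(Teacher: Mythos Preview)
Your proposal is correct and follows essentially the same route as the paper's proof: set $c_5=\min\{c_3,c_4\}$, split according to whether the exceptional character at level $D_1$ has modulus at most $D_0$, and in each case insert the estimates of Theorems \ref{thm.primenumbertheorem} and \ref{thm.primenumbertheoreminaps} into the inversion formula (\ref{eqn.fourierinversionforpsi}). The only differences are cosmetic --- you are slightly more explicit about identifying which character of modulus $qq_D$ is induced by $\chi_D$ and about absorbing the $\log q\log x$ term --- but the structure and ingredients are identical.
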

\begin{proof}
Let $c_5:=\min\{c_3,c_4\}$. We split into two cases according to
whether on not there is an exceptional character $\chi_D$ with
modulus at most $D_0$.

First, suppose that $\chi_D$ does exist, has zero $\beta_D$ and has
modulus $q_D \leq D_0$. By Proposition
\ref{prop.effectiveboundonexceptionalzero}, $(1-\beta_D)^{-1}$
satisfies the appropriate bound. Now suppose that $x \geq 1$ is real
and $a$ and $q$ are integers with $1 \leq qq_D \leq D_1$. Write
$\chi'$ for the principal character of modulus $qq_D$. There is
exactly one character of modulus $qq_D$ induced by $\chi_D$ and that
is $\chi'\chi_D$. For this character, by Theorem
\ref{thm.primenumbertheoreminaps} case
(\ref{case.primenumbertheoreminapscase1}), we have
\begin{equation*}
\psi(x,\chi'\chi_D)=-\frac{x^{\beta_D}}{\beta_D} + O\left(x
\exp\left(-\frac{c_5 \log x}{\sqrt{\log x} + \log D_1}\right)(\log
D_1)^2\right).
\end{equation*}
For all other non-principal characters $\chi$ we have, by Theorem
\ref{thm.primenumbertheoreminaps} case
(\ref{case.primenumbertheoreminapscase2}),
\begin{equation*}
\psi(x,\chi)= O\left(x \exp\left(-\frac{c_5 \log x}{\sqrt{\log x} +
\log D_1}\right)(\log D_1)^2\right).
\end{equation*}
Finally by Theorem \ref{thm.primenumbertheorem} we have
\begin{equation*}
\psi(x,\chi') = x +O\left(x\exp(-c_5\sqrt{\log x}) + \log D_1\log
x\right).
\end{equation*}
Inserting these into (\ref{eqn.fourierinversionforpsi}) gives the
first case of the proposition.

In the second case we suppose that either $\chi_D$ doesn't exist or,
if it does, then it has modulus greater than $D_0$. Now suppose that
$x \geq 1$ is real and $a$ and $q$ are integers with $1 \leq q \leq
D_0$. Since $q$ is smaller than the modulus of $\chi_D$, if it
exists at all, no character of modulus $q$ is induced by $\chi_D$,
and we can apply Theorem \ref{thm.primenumbertheoreminaps} case
(\ref{case.primenumbertheoreminapscase2}) to conclude that
\begin{equation*}
\psi(x,\chi) =  O\left(x \exp\left(-\frac{c_5 \log x}{\sqrt{\log x}
+ \log D_1}\right)(\log D_1)^2\right)
\end{equation*}
for every non-principal $\chi$ of modulus $q$. Once again Theorem
\ref{thm.primenumbertheorem} gives
\begin{equation*}
\psi(x,\chi') = x +O\left(x\exp(-c_5\sqrt{\log x}) + \log D_1\log
x\right),
\end{equation*}
for $\chi'$ the principal character of modulus $q$. Now inserting
these estimates into (\ref{eqn.fourierinversionforpsi}) we find
ourselves in the second case of the proposition.
\end{proof}

\section{The major arcs}\label{sec.majorarcestimate}

We are interested in the Fourier transform of the von-Mangoldt
function $\Lambda$ and some closely related functions. Suppose that
$N$ and $d$ are positive integers. We write
\begin{equation*}
\Lambda_{N,d}:=\begin{cases}  \Lambda(dx+1) & \textrm{ if } 1 \leq x
\leq N\\ 0 & \textrm{ otherwise}.
\end{cases}
\end{equation*}
We write $\Lambda_N$ as shorthand for $\Lambda_{N,1}$. There will be
two types of estimate for $\wh{\Lambda_{N,d}}$ depending on whether
or not a given pair of parameters $D_1\geq D_0 \geq 2$ is
exceptional or unexceptional. The reader may care to recall the
definition from Proposition \ref{prop.usefulprimenumbertheorem}.

Before we begin it will be useful to recall some standard
definitions; the reader unfamiliar with this material may wish to
consult the book \cite{HLMRCV}. For an integer $a$ and positive integer
$q$ the Ramanujan sum $c_q(a)$ is defined by
\begin{equation*}
c_q(a):=
\sum_{\genfrac{}{}{0pt}{}{h=1}{(h,q)=1}}^q{e\left(\frac{ha}{q}\right)},
\end{equation*}
and, moreover, $c_q(1)=\mu(q)$.

If positive integers $q$ and $d$ are coprime, write $m_{d,q}$ for a
solution to $m_{d,q}d + 1\equiv 0 \pmod q$. Then for any integers
$a,q$ and $d$ with $q$ and $d$ positive we put
\begin{equation*}
\tau_{a,d,q}:=\sum_{\genfrac{}{}{0pt}{}{m=0}{(md+1,q)=1}}^{q-1}{e\left(m\frac{a}{q}\right)}
= \begin{cases} c_q(a)e\left(-m_{d,q}\frac{a}{q}\right) & \textrm{
if } (d,q)=1\\ 0 & \textrm{ otherwise.}
\end{cases}
\end{equation*}
The proof of the equivalence of the sum with the expression in terms
of the Ramanujan sum is a simple change of variables.

The remainder of this section provides major arcs estimates for the
two cases when the pair $(D_1,D_0)$ is exceptional and
unexceptional.

\subsection{Exceptional pairs}

Throughout this subsection we assume that the pair $(D_1,D_0)$ is
exceptional. We begin by estimating $\wh{\Lambda_{N,d}}$ at a
rational with small denominator and then extend the range.
\begin{lemma}\label{lem.majorarcsatrationals}
There is an absolute constant $c_6>0$ such that for every set of
non-negative integers $N,a,q,d$ with $d_D | d$, $1 \leq dq \leq D_1$
and $N \geq 1$ we have
\begin{eqnarray*}
\wh{\Lambda_{N,d}}(a/q)  =  \frac{dN\tau_{a,d,q}}{\phi(d)\phi(q)} &
- &  \frac{(dN)^{\beta_D}\tau_{a,d,q}}{\phi(d)\phi(q)\beta_D}\\ & +
& O\left(ND_1^2 \exp\left(-\frac{c_6\log N}{\sqrt{\log N} + \log
D_1}\right)\right).
\end{eqnarray*}
\end{lemma}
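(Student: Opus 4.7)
The plan is to open up the Fourier sum via the substitution $y = dx+1$, express each resulting residue class sum as a value of $\psi(\,\cdot\,;dq,\,\cdot\,)$, and then invoke the exceptional case of Proposition \ref{prop.usefulprimenumbertheorem}. Since $\Lambda_{N,d}$ is supported on $\{1,\dots,N\}$, one writes
\begin{equation*}
\wh{\Lambda_{N,d}}(a/q) = \sum_{x=1}^{N}\Lambda(dx+1)e(-xa/q).
\end{equation*}
Substituting $y = dx+1$ and grouping by the residue class $y \equiv 1+dm \pmod{dq}$ for $0 \leq m \leq q-1$ (so that $x \equiv m \pmod{q}$ and $e(-xa/q)=e(-am/q)$ is constant on the class; the only $y$ value discarded by the constraint $x\geq 1$ is $y=1$, at which $\Lambda$ vanishes), this becomes
\begin{equation*}
\wh{\Lambda_{N,d}}(a/q) = \sum_{m=0}^{q-1}e(-am/q)\,\psi(dN+1;dq,1+dm).
\end{equation*}

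Next I apply the exceptional case of Proposition \ref{prop.usefulprimenumbertheorem} to each $\psi$-term; its hypothesis is met because the modulus $dq$ is a multiple of $q_D$ (since $d_D \mid d$) and $dq \leq D_1$. This produces two main terms
\begin{equation*}
\frac{\overline{\chi'}(1+dm)(dN+1)}{\phi(dq)} - \frac{\overline{\chi'\chi_D}(1+dm)(dN+1)^{\beta_D}}{\phi(dq)\beta_D},
\end{equation*}
where $\chi'$ is the principal character modulo $dq$. The crucial observation is that $\chi_D$ has conductor $q_D \mid d$, so $1+dm \equiv 1 \pmod{q_D}$ forces $\chi_D(1+dm)=1$ whenever $(1+dm,dq)=1$; consequently $\overline{\chi'\chi_D}(1+dm)=\overline{\chi'}(1+dm)$, and both main terms share a common character sum. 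Using the identity $(1+dm,dq)=(1+dm,q)$ (valid because $(1+dm,d)=(1,d)=1$), this sum equals
\begin{equation*}
\sum_{\substack{m=0\\(1+dm,q)=1}}^{q-1}e(-am/q)=\tau_{-a,d,q},
\end{equation*}
which matches the $\tau_{a,d,q}$ of the statement up to the paper's Fourier sign convention (using $c_q(-a)=c_q(a)$).

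I then collapse $\phi(dq)$ to $\phi(d)\phi(q)$; this is justified when $(d,q)=1$, and in the remaining case $(d,q)>1$ the quantity $\tau_{a,d,q}$ vanishes by its definition and so does the character sum on our side, reducing the claim to the error bound alone. The discrepancies $(dN+1)=dN+O(1)$ and $(dN+1)^{\beta_D}=(dN)^{\beta_D}+O(1)$ (since $\beta_D \leq 1$) are absorbed into the error. For the error itself, each of the $q$ residue classes contributes $O\bigl(dN\exp(-c_5\log(dN)/(\sqrt{\log(dN)}+\log D_1))(\log D_1)^2\bigr)$, so the total error is $O(dq\cdot N(\log D_1)^2\exp(\cdots))=O(D_1 N(\log D_1)^2\exp(\cdots))$, which is majorised by $O(ND_1^2\exp(-c_6\log N/(\sqrt{\log N}+\log D_1)))$ for any $c_6<c_5$ after trading polylogarithmic factors for a slightly weaker exponent (using $\log(dN)\geq \log N$ and $\sqrt{\log(dN)}+\log D_1\ll \sqrt{\log N}+\log D_1$ since $d\leq D_1$). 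The main obstacle is the character bookkeeping: one has to check that $\chi_D$ is trivialised on the residues $1+dm$ (which is exactly why the hypothesis $d_D \mid d$ is imposed) so that the main and exceptional contributions share the same Ramanujan-type coefficient; once this is in hand the remaining calculation is routine.
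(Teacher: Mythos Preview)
Your proof is correct and follows essentially the same route as the paper: expand $\wh{\Lambda_{N,d}}(a/q)$ as a sum over residue classes $1+dm\pmod{dq}$, apply the exceptional case of Proposition~\ref{prop.usefulprimenumbertheorem}, use $d_D\mid d$ to trivialise $\chi_D(1+dm)$, and recognise the resulting character sum as $\tau_{a,d,q}$. Your treatment of the sign convention, the $\phi(dq)=\phi(d)\phi(q)$ reduction, and the error-term bookkeeping is in fact slightly more explicit than the paper's, but the argument is the same (the paper takes $c_6=c_5/4$).
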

\begin{proof}
Note the formula
\begin{eqnarray}\label{eqn.formulaforlambdahat}
\wh{\Lambda_{N,d}}(a/q) & = & \sum_{\genfrac{}{}{0pt}{}{x \leq
dN+1}{x \equiv 1 \pmod
d}}{\Lambda(x)e\left(\frac{a(x-1)}{dq}\right)}\\ \nonumber & =
& \sum_{m=0}^{q-1}{e\left(m \frac{a}{q}\right)\sum_{\genfrac{}{}{0pt}{}{x \leq dN+1}{\genfrac{}{}{0pt}{}{x \equiv 1 \pmod d}{\frac{x-1}{d} \equiv m \pmod q}}}{\Lambda(x)}}\\
\nonumber & = & \sum_{m=0}^{q-1}{e\left(m
\frac{a}{q}\right)\psi(dN+1;dq,md+1)}.
\end{eqnarray}
Since $(D_1,D_0)$ is exceptional we get a character $\chi_D$ of
modulus $d_D \leq D_0$ and a real $\beta_D$ with $(1-\beta_D)^{-1}
=O(d_D^{1/2}\log^2d_D)$ such that for any real $x \geq 1$ and
integers $a'$ and $q'$ with $1 \leq q'd_D \leq D_1$ we have
\begin{eqnarray}
\label{eqn.estimateforpsi}\psi(x;q'd_D,a') & = &
\frac{\overline{\chi'}(a')x}{\phi(q'd_D)} -
\frac{\overline{\chi'\chi_D}(a')x^{\beta_D}}{\phi(q'd_D)\beta_D}\\
\nonumber & & + O\left(x \exp\left(-\frac{c_5\log x}{\sqrt{\log x} +
\log D_1}\right)(\log D_1)^2\right),
\end{eqnarray}
where $\chi'$ is the principal character of modulus $q'd_D$. Now
suppose that $d_D | d$ and $1 \leq dq \leq D_1$. There are three
terms to consider when substituting (\ref{eqn.estimateforpsi}), with
$q'=dq/d_D$, $x=dN+1$ and $a'=md+1$, into
(\ref{eqn.formulaforlambdahat}). First,
\begin{eqnarray*}
\sum_{m=0}^{q-1}{e\left(m
\frac{a}{q}\right)\frac{\overline{\chi'}(md+1)(dN+1)}{\phi(dq)}} & =
&
\frac{(dN+1)}{\phi(dq)}\sum_{\genfrac{}{}{0pt}{}{m=0}{(md+1,dq)=1}}^{q-1}{e\left(m
\frac{a}{q}\right)}\\ & = &
\frac{(dN+1)\tau_{a,d,q}}{\phi(d)\phi(q)},
\end{eqnarray*}
recalling the definition of $\tau_{a,d,q}$ and the fact that it is
zero unless $(d,q)=1$. Secondly, we have the sum
\begin{equation*}
\sum_{m=0}^{q-1}{e\left(m
\frac{a}{q}\right)\frac{\overline{\chi'\chi_D}(md+1)(dN+1)^{\beta_D}}{\phi(dq)\beta_D}}.
\end{equation*}
Since $\chi_D$ has modulus $d_D$ which divides $d$ we conclude that
$\chi_D(md+1) = \chi_D(1) =1$ whatever the value of $m$, thus the
above sum is equal to
\begin{equation*}
\sum_{m=0}^{q-1}{e\left(m
\frac{a}{q}\right)\frac{\overline{\chi'}(md+1)(dN+1)^{\beta_D}}{\phi(dq)\beta_D}}
= \frac{(dN+1)^{\beta_D}\tau_{a,d,q}}{\phi(d)\phi(q)\beta_D}
\end{equation*}
by the same calculation as for the previous sum. Finally, we have an
error term
\begin{equation*}
O\left(q(dN+1)\exp\left(-\frac{c_5\log (dN+1)}{\sqrt{\log (dN+1)} +
\log D_1}\right)(\log D_1)^2\right),
\end{equation*}
which is certainly
\begin{equation*}
O\left(N D_1^2 \exp\left(-\frac{c_6\log N}{\sqrt{\log N} + \log
D_1}\right)\right)
\end{equation*}
for $c_6:=c_5/4$. Combining these terms yields the lemma.
\end{proof}

\begin{proposition}[Major arcs estimate for exceptional pairs]\label{prop.majorarcsestimateforexceptionalpairs}
For all non-negative integers $N,a,q,d$ with $d_D | d$, $1 \leq dq
\leq D_1$, $(a,q)=1$ and $N \geq 1$, and elements $\theta \in \T$ we
have
\begin{equation*}
|\wh{\Lambda_{N,d}}(\theta)| \leq
\frac{|\wh{\Lambda_{N,d}}(0)|}{\phi(q)}+ O\left((1+|\kappa|N)ND_1^2
\exp\left(-\frac{c_6\log N}{\sqrt{\log N} + \log D_1}\right)\right),
\end{equation*}
where $\kappa:= \theta - a/q$ and
\begin{equation*}
|\wh{\Lambda_{N,d}}(0)| \gg \frac{N}{\phi(d)} + O\left(ND_1^2
\exp\left(-\frac{c_6\log N}{\sqrt{\log N} + \log D_1}\right)\right).
\end{equation*}
\end{proposition}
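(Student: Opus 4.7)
The plan is to apply Lemma \ref{lem.majorarcsatrationals} pointwise and then transfer the bound from $a/q$ to nearby $\theta = a/q + \kappa$ via Abel summation, using $|\tau_{a,d,q}| \leq 1$ (which holds under $(a,q)=1$ since $c_q(a) = \mu(q)$ in that case) and the positivity of $\Lambda_{N,d}$.

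First I would handle the lower bound on $|\wh{\Lambda_{N,d}}(0)|$. Applying Lemma \ref{lem.majorarcsatrationals} with $a=0,\,q=1$, in which case $\tau_{0,d,1}=1$ and $\phi(1)=1$, one obtains
\[
\wh{\Lambda_{N,d}}(0) \;=\; \frac{dN}{\phi(d)} - \frac{(dN)^{\beta_D}}{\phi(d)\beta_D} + O\!\Bigl(N D_1^{2}\exp\bigl(-c_6 \log N/(\sqrt{\log N}+\log D_1)\bigr)\Bigr).
\]
Since $d/\phi(d)\geq 1$ and $\beta_D\in(0,1)$, the exceptional-zero contribution is dominated by the main term once $(dN)^{\beta_D-1}/\beta_D$ is bounded away from $1$; the effective lower bound on $1-\beta_D$ supplied by Proposition \ref{prop.effectiveboundonexceptionalzero} makes this quantitative. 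In the small regime where this comparison fails, the error term itself is already of size $\gtrsim N/\phi(d)$, so the stated inequality holds trivially.

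For the upper bound at $\theta = a/q + \kappa$, set $F(M):=\wh{\Lambda_{M,d}}(a/q)$ for $1\leq M\leq N$. Abel summation yields
\[
\wh{\Lambda_{N,d}}(a/q+\kappa) \;=\; F(N)\,e(-N\kappa) + (1-e(-\kappa))\sum_{M=1}^{N-1} F(M)\, e(-M\kappa).
\]
Applying Lemma \ref{lem.majorarcsatrationals} to each $F(M)$ and comparing with the formula for $\wh{\Lambda_{M,d}}(0)$ gives the clean identity
\[
F(M) \;=\; \frac{\tau_{a,d,q}}{\phi(q)}\,\wh{\Lambda_{M,d}}(0) + O\!\bigl(MD_1^{2}\exp(\ldots)\bigr),
\]
and then $|\tau_{a,d,q}|\leq 1$ produces the desired factor $1/\phi(q)$. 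To handle the sum of main terms, I would interchange the orders of summation and evaluate the resulting inner geometric sum, obtaining
\[
\Bigl|(1-e(-\kappa))\sum_{M=1}^{N-1}\wh{\Lambda_{M,d}}(0)\,e(-M\kappa)\Bigr| \;\leq\; 2\pi\,|\wh{\Lambda_{N,d}}(0)|,
\]
where the bound $|1-e(-\kappa)|\leq 2\pi|\kappa|$ is cancelled by the $1/|\kappa|$ coming from the geometric series; meanwhile, summing the error contributions from each $F(M)$ picks up the advertised factor $(1+|\kappa|N)$ from $|1-e(-\kappa)|\cdot N \leq 2\pi|\kappa|N$.

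The main obstacle is precisely this bookkeeping in the Abel summation: one must ensure that the $(1+|\kappa|N)$ enlargement attaches to the error and not to the main term. The key is that $\wh{\Lambda_{M,d}}(0)$ is monotone in $M$ (because $\Lambda_{N,d}\geq 0$), which is exactly what allows the inner geometric sum in the reversed double sum to be summed exactly rather than merely estimated term by term. Everything else is essentially routine substitution into the formulas of Lemma \ref{lem.majorarcsatrationals}.
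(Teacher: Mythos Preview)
Your approach via discrete Abel summation is essentially the paper's (which uses the continuous analogue, integration by parts), and both rest on the identity $\wh{\Lambda_{x,d}}(a/q)=\frac{\tau_{a,d,q}}{\phi(q)}\wh{\Lambda_{x,d}}(0)+O(\cdot)$ drawn from Lemma~\ref{lem.majorarcsatrationals}. The one loose end is the constant in front of the main term. Bounding the geometric sum crudely by $O(1/|\kappa|)$ as you describe yields
\[
|\wh{\Lambda_{N,d}}(\theta)|\leq \frac{C}{\phi(q)}\,|\wh{\Lambda_{N,d}}(0)|+O(\cdot)
\]
with some absolute $C>1$ (roughly $1+2\pi$ in your accounting), not $C=1$ as the proposition states.

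This is easily repaired within your own framework. If you compute the inner geometric sum exactly and then add back the boundary term $F(N)e(-N\kappa)$, the main contribution telescopes to $\frac{\tau_{a,d,q}}{\phi(q)}\wh{\Lambda_{N,d}}(\kappa)$ (this is just the Abel summation undone on the sequence $\Lambda(dn+1)$), and now positivity of $\Lambda$ gives $|\wh{\Lambda_{N,d}}(\kappa)|\leq\wh{\Lambda_{N,d}}(0)$ with constant exactly~$1$. The paper reaches the same constant by a slightly different device: it substitutes the smooth approximation $dx-(dx)^{\beta_D}/\beta_D$ for $\phi(d)\,\wh{\Lambda_{x,d}}(0)$, performs a \emph{second} integration by parts, and bounds the resulting oscillatory integral $\int_0^N(d-d^{\beta_D}x^{\beta_D-1})e(\kappa x)\,dx$ by the corresponding non-oscillatory integral, using that $d-d^{\beta_D}x^{\beta_D-1}\geq 0$ for $dx\geq 1$. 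Either refinement recovers $C=1$; the weaker $C=O(1)$ you obtain as written would in any case suffice for the sole application, namely (\ref{eqn.majorarcsestimate}).
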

\begin{proof}
Begin by applying Lemma \ref{lem.majorarcsatrationals} to get that
for every set of non-negative integers $x,a,q,d$ with $d_D | d$, $1
\leq dq \leq D$ and $1 \leq x \leq N$ we have
\begin{eqnarray}\label{eqn.generalestimateforlambda}
\wh{\Lambda_{x,d}}(a/q) & = & \frac{dx\tau_{a,d,q}}{\phi(d)\phi(q)}
- \frac{(dx)^{\beta_D}\tau_{a,d,q}}{\phi(d)\phi(q)\beta_D}\\
\nonumber & & + O\left(ND_1^2 \exp\left(-\frac{c_6\log N}{\sqrt{\log
N} + \log D_1}\right)\right).
\end{eqnarray}
In particular we have
\begin{equation}\label{eqn.zeroestimateforlambda}
\wh{\Lambda_{x,d}}(0) = \frac{dx}{\phi(d)} -
\frac{(dx)^{\beta_D}}{\phi(d)\beta_D} + O\left(ND_1^2
\exp\left(-\frac{c_6\log N}{\sqrt{\log N} + \log D_1}\right)\right),
\end{equation}
and hence
\begin{equation}\label{eqn.usefulestimateforlambda}
\wh{\Lambda_{x,d}}(a/q) =
\frac{\tau_{a,d,q}}{\phi(q)}\wh{\Lambda_{x,d}}(0) + O\left(ND_1^2
\exp\left(-\frac{c_6\log N}{\sqrt{\log N} + \log D_1}\right)\right).
\end{equation}

Observe, by telescoping, that
\begin{equation*}
\wh{\Lambda_{N,d}}(\theta) = \sum_{n
=1}^{N}{(\wh{\Lambda_{n,d}}(a/q) -
\wh{\Lambda_{n-1,d}}(a/q))e(\kappa n)}.
\end{equation*}
Integration by parts then tells us that
\begin{equation}\label{eqn.lambdahatbyparts}
\wh{\Lambda_{N,d}}(\theta) = \left[\wh{\Lambda_{x,d}}(a/q)e(\kappa
x)\right]_0^N - 2\pi i
\kappa\int_0^N{\wh{\Lambda_{x,d}}(a/q)e(\kappa x)dx}.
\end{equation}
We use (\ref{eqn.usefulestimateforlambda}) to estimate the right
hand side of this. The first term is
\begin{equation*}
\frac{\tau_{a,d,q}e(\kappa N)}{\phi(q)}\wh{\Lambda_{N,d}}(0) +
O\left(ND_1^2 \exp\left(-\frac{c_6\log N}{\sqrt{\log N} + \log
D_1}\right)\right).
\end{equation*}
We consider the second term on the right of
(\ref{eqn.lambdahatbyparts}) in two parts. First, note that
\begin{equation*}
2\pi i \kappa \int_0^N{\left(\wh{\Lambda_{x,d}}(a/q)
-\frac{dx\tau_{a,d,q}}{\phi(d)\phi(q)}+\frac{(dx)^{\beta_D}\tau_{a,d,q}}{\beta_D\phi(d)\phi(q)}\right)
e(\kappa x)dx}
\end{equation*}
is
\begin{equation*}
O\left(|\kappa|N^2D_1^2 \exp\left(-\frac{c_6\log N}{\sqrt{\log N} +
\log D_1}\right)\right)
\end{equation*}
by (\ref{eqn.generalestimateforlambda}). Secondly, note that
\begin{equation*}
2\pi i \kappa
\int_0^N{\left(\frac{dx\tau_{a,d,q}}{\phi(d)\phi(q)}-\frac{(dx)^{\beta_D}\tau_{a,d,q}}{\beta_D\phi(d)\phi(q)}\right).
e(\kappa x)dx}
\end{equation*}
is equal to
\begin{equation*}
\frac{\tau_{a,d,q}}{\phi(d)\phi(q)}\left[\left(dx-\frac{(dx)^{\beta_D}}{\beta_D}\right).
e(\kappa x)\right]_0^N - \frac{\tau_{a,d,q}}{\phi(d)\phi(q)}
\int_0^N{\left(d-d^{\beta_D}x^{\beta_D-1}\right). e(\kappa x)dx},
\end{equation*}
by integration by parts. The first term here is equal to
\begin{equation*}
\frac{\tau_{a,d,q}e(\kappa N)}{\phi(q)}\wh{\Lambda_{N,d}}(0) +
O\left(ND_1^2 \exp\left(-\frac{c_6\log N}{\sqrt{\log N} + \log
D_1}\right)\right),
\end{equation*}
by (\ref{eqn.zeroestimateforlambda}). So, combining what we have so
far we get that
\begin{eqnarray}\label{eqn.almostestimated}
\wh{\Lambda_{N,d}}(\theta) & = &
\frac{\tau_{a,d,q}}{\phi(q)\phi(d)}\int_0^N{(d-d^{\beta_D}x^{\beta_D-1})e(\kappa
x)dx}\\ \nonumber & & + O\left((1+|\kappa|N)ND_1^2
\exp\left(-\frac{c_6\log N}{\sqrt{\log N} + \log D_1}\right)\right).
\end{eqnarray}
Now, note that $d(1- (dx)^{\beta_D-1}) \geq 0$ if $dx \geq 1$, so
\begin{eqnarray*}
\left|\int_0^N{\left(d-d^{\beta_D}x^{\beta_D-1}\right). e(\kappa
x)dx}\right| & \leq &
\left|\int_1^N{\left(d-d^{\beta_D}x^{\beta_D-1}\right). e(\kappa
x)dx}\right|+ O(1)\\ & \leq
&\int_1^N{\left|d-d^{\beta_D}x^{\beta_D-1}\right|dx}+ O(1)\\
& = &\int_1^N{d-d^{\beta_D}x^{\beta_D-1}dx}+ O(1)\\ & = & dN -
\frac{(dN)^{\beta_D}}{\beta_D} + O(1).
\end{eqnarray*}
Thus we conclude that the integral in (\ref{eqn.almostestimated}) is
bounded above in absolute value by
\begin{equation*}
|\wh{\Lambda_{N,d}}(a/q)| + O\left((1+|\kappa|N)ND_1^2
\exp\left(-\frac{c_6\log N}{\sqrt{\log N} + \log D_1}\right)\right).
\end{equation*}
Hence, by (\ref{eqn.usefulestimateforlambda}),
\begin{equation*}
|\wh{\Lambda_{N,d}}(\theta)| \leq
\frac{|\tau_{a,d,q}|}{\phi(q)}|\wh{\Lambda_{N,d}}(0)| +
O\left((1+|\kappa|N)ND_1^2 \exp\left(-\frac{c_6\log N}{\sqrt{\log N}
+ \log D_1}\right)\right).
\end{equation*}
Now, if $(a,q)=1$ then $|\tau_{a,d,q}| \leq 1$ so we have the first
part of the proposition.

To get the lower bound on $|\wh{\Lambda_{N,d}}(0)|$ we return to
(\ref{eqn.zeroestimateforlambda}). If $x \geq 16$ and $\epsilon \in
(0,1/2]$ then $1-x^{-\epsilon}/(1-\epsilon) \geq \epsilon$, whence
\begin{equation*}
dN - \frac{(dN)^{\beta_D}}{\beta_D} \geq dN (\beta_D-1) + O(1).
\end{equation*}
Inserting the estimate for $(\beta_D-1)^{-1}$ (which we get since
$(D_1,D_0)$ is exceptional) and recalling that $d \geq d_D$ yields
the lower bound for $|\wh{\Lambda_{N,d}}(0)|$.
\end{proof}

\subsection{Unexceptional pairs}

In this subsection we assume that $(D_1,D_0)$ is unexceptional. The
argument here is easier than that for exceptional pairs and proceeds
as above except that terms involving the exceptional zero do not
occur. We omit the details.

\begin{proposition}[Major arcs estimate for unexceptional pairs]\label{prop.majorarcsestimateforunexceptionalpairs}
For every set of non-negative integers $N,a,q,d$ with $1 \leq dq
\leq D_0$, $(a,q)=1$ and $N \geq 1$, and elements $\theta \in \T$ we
have
\begin{equation*}
|\wh{\Lambda_{N,d}}(\theta) \leq
\frac{|\wh{\Lambda_{N,d}}(0)|}{\phi(q)} + O\left((1+|\kappa|N)ND_1^2
\exp\left(-\frac{c\log N}{\sqrt{\log N} + \log D_1}\right)\right)
\end{equation*}
where $\kappa:= \theta - a/q$ and
\begin{equation*}
|\wh{\Lambda_{N,d}}(0)| \geq \frac{dN}{\phi(d)} + O\left(ND_1^2
\exp\left(-\frac{c\log N}{\sqrt{\log N} + \log D_1}\right)\right).
\end{equation*}
\end{proposition}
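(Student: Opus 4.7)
The plan is to mirror the proof of Proposition \ref{prop.majorarcsestimateforexceptionalpairs}, systematically deleting every contribution from the exceptional zero $\beta_D$ and exceptional character $\chi_D$. The starting point is the identity
\begin{equation*}
\wh{\Lambda_{x,d}}(a/q) = \sum_{m=0}^{q-1}{e\!\left(m \frac{a}{q}\right)\psi(dx+1;dq,md+1)}
\end{equation*}
for $1 \leq x \leq N$. Since $dq \leq D_0$, case (ii) of Proposition \ref{prop.usefulprimenumbertheorem} applies at modulus $dq$, producing only the principal-character main term $\overline{\chi'}(md+1)(dx+1)/\phi(dq)$ plus an error of the advertised shape. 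Summing against $e(ma/q)$ and invoking the definition of $\tau_{a,d,q}$ (and the fact that it vanishes unless $(d,q)=1$) collapses the main term to $(dx+1)\tau_{a,d,q}/(\phi(d)\phi(q))$, yielding the rational-point estimate
\begin{equation*}
\wh{\Lambda_{x,d}}(a/q) = \frac{dx\,\tau_{a,d,q}}{\phi(d)\phi(q)} + O\!\left(ND_1^2 \exp\!\left(-\frac{c\log N}{\sqrt{\log N} + \log D_1}\right)\right),
\end{equation*}
and, in particular (at $a=0$, $q=1$), $\wh{\Lambda_{x,d}}(0) = dx/\phi(d) + O(\cdots)$. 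Combining these two identities gives the unexceptional analogue of (\ref{eqn.usefulestimateforlambda}): $\wh{\Lambda_{x,d}}(a/q) = \frac{\tau_{a,d,q}}{\phi(q)}\wh{\Lambda_{x,d}}(0) + O(\cdots)$.

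The extension from rationals $a/q$ to arbitrary $\theta = a/q + \kappa$ proceeds exactly as in the exceptional argument, via the telescoping identity followed by integration by parts:
\begin{equation*}
\wh{\Lambda_{N,d}}(\theta) = \left[\wh{\Lambda_{x,d}}(a/q)e(\kappa x)\right]_0^N - 2\pi i \kappa\int_0^N{\wh{\Lambda_{x,d}}(a/q)e(\kappa x)\,dx}.
\end{equation*}
The boundary contribution evaluates to $\frac{\tau_{a,d,q}e(\kappa N)}{\phi(q)}\wh{\Lambda_{N,d}}(0)$ up to acceptable error, and inside the integral the difference $\wh{\Lambda_{x,d}}(a/q) - dx\tau_{a,d,q}/(\phi(d)\phi(q))$ contributes $O\!\left(|\kappa|N^2 D_1^2\exp(\cdots)\right)$. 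The principal piece $2\pi i\kappa \int_0^N \tfrac{dx\,\tau_{a,d,q}}{\phi(d)\phi(q)} e(\kappa x)\,dx$ is then integrated by parts one further time. This is where the unexceptional case is strictly cleaner than the exceptional one: the derivative of the linear term $dx$ is the constant $d$, so the crude bound $\bigl|\int_0^N d\, e(\kappa x)\,dx\bigr| \leq dN$ is enough, bypassing the $d(1-(dx)^{\beta_D-1})\geq 0$ sign analysis used before. Assembling these pieces and using $|\tau_{a,d,q}| \leq 1$ (valid because $(a,q)=1$) delivers the first inequality.

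For the lower bound on $|\wh{\Lambda_{N,d}}(0)|$ there is essentially nothing left to do: the zero-value estimate already reads $\wh{\Lambda_{N,d}}(0) = dN/\phi(d) + O(\cdots)$, with no $(dN)^{\beta_D}/\beta_D$ correction to dominate, so no appeal to Proposition \ref{prop.effectiveboundonexceptionalzero} is required. I do not anticipate any substantive obstacle; every step is a simplification of the exceptional argument, and the only bookkeeping point is confirming that $dq \leq D_0$ legitimately activates case (ii) of Proposition \ref{prop.usefulprimenumbertheorem} at modulus $dq$ (rather than just at modulus $q$).
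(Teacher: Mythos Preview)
Your proposal is correct and is precisely the approach the paper intends: the authors explicitly state that the argument ``proceeds as above except that terms involving the exceptional zero do not occur'' and omit the details. Your write-up faithfully carries out that simplification, including the correct observation that $dq \leq D_0$ is what places the modulus $dq$ in range for case (ii) of Proposition~\ref{prop.usefulprimenumbertheorem}.
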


\section{The minor arcs}\label{sec.minorarcestimate}

The minor arcs are far easier to estimate that the major arcs were.
We begin with Vinogradov's classic estimate, recalling that
$\Lambda_N$ is shorthand for $\Lambda_{N,1}$.
\begin{theorem}\emph{(\cite[Chapter 25]{HD})}\label{thm.minaorarcsestimate}
Suppose that $N$ and $q \leq Q$ are positive integers, $\theta \in
\T$ and $a \in \{1,\dots,q\}$ is coprime to $q$ and has $|\theta -
a/q| \leq 1/qQ$. Then
\begin{equation*}
|\wh{\Lambda_N(\theta)}| \ll (\log
N)^4\left(\frac{N}{\sqrt{q}}+N^{4/5} + \sqrt{Nq}\right).
\end{equation*}
\end{theorem}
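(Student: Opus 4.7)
The plan is to apply Vaughan's identity to decompose $\Lambda$, reducing the Fourier estimate to bilinear exponential sums. With parameters $U, V \in [1, N]$ (to be optimised later), Vaughan's identity expresses $\Lambda(n)$ for $n > U$ as a fixed linear combination of M\"obius-weighted divisor sums. Substituting into $\wh{\Lambda_N}(\theta) = \sum_{n \leq N}\Lambda(n)e(-n\theta)$ and performing dyadic decomposition in each resulting variable (at a cost of a power of $\log N$) yields sums of two types: \emph{Type~I} sums $\sum_{m \leq UV} \alpha_m \sum_{k \leq N/m} e(-mk\theta)$ with bounded divisor-type coefficients, in which the inner variable runs over a full interval; and \emph{Type~II} sums $\sum_{m \sim M}\sum_{k \sim K} \alpha_m \beta_k e(-mk\theta)$ with $U \leq M$, $V \leq K$, $MK \ll N$, in which both variables carry arbitrary bounded weights.

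The common input to both estimates is the standard auxiliary bound: for $(a,q)=1$, $q \leq Q$, $|\theta - a/q| \leq 1/qQ$, and real $L, M' \geq 1$,
\begin{equation*}
\sum_{m \leq M'} \min(L/m, \|m\theta\|^{-1}) \ll \left(\frac{L}{q} + M' + q\right)\log(2qM').
\end{equation*}
This is proved by partitioning $m$ into blocks of length $q$ and exploiting the near-equidistribution of $\{m\theta\}$ within each block coming from the Diophantine approximation $|\theta - a/q| \leq 1/qQ$. Applied to Type~I after evaluating the inner geometric sum exactly, it yields $|S_I| \ll (\log N)^{O(1)}(N/q + UV + q)$.

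For Type~II, Cauchy--Schwarz in one of the two variables squares out the other's weights; expanding and swapping summations produces an inner sum of the form $\sum_h \min(\cdot, \|h\theta\|^{-1})$ over differences $h$, again controlled by the auxiliary estimate. Optimising over which variable to Cauchy--Schwarz and using $MK \leq N$ and $\min(M,K) \leq \sqrt{N}$ then gives $|S_{II}| \ll (\log N)^{O(1)}(N/\sqrt{q} + N^{3/4} + \sqrt{Nq})$. Choosing $U = V = N^{2/5}$ balances $UV = N^{4/5}$ against the other contributions and yields the stated estimate. The main technical work is the careful proof of the auxiliary $\min$-sum estimate; once this is in place, the Vaughan-identity decomposition and bilinear manipulations are mechanical but require attention to the dyadic bookkeeping.
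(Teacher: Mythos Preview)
The paper does not give its own proof of this theorem: it is quoted as a black box from Davenport \cite[Chapter 25]{HD}. Your sketch via Vaughan's identity, the auxiliary $\sum_m \min(\cdot,\|m\theta\|^{-1})$ estimate, and the Type~I/Type~II bilinear treatment with $U=V=N^{2/5}$ is precisely the argument in Davenport's Chapter~25, so there is nothing to compare. One small point: the Type~II contribution is not independent of $U,V$ as your phrasing suggests; the bilinear estimate for a dyadic block with $M\sim M_0$, $K\sim K_0$ carries a term of order $N/\sqrt{\min(M_0,K_0)}$, and it is the constraint $M_0\geq U$, $K_0\geq V$ that makes this $\ll N/\sqrt{U}+N/\sqrt{V}=2N^{4/5}$ after the choice $U=V=N^{2/5}$, matching the $N^{4/5}$ in the statement rather than the $N^{3/4}$ you wrote.
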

This has the following relevant corollary.
\begin{corollary}[Minor arcs estimate]\label{cor.specializedminaorarcsestimate}
Suppose that $d \leq N$ and $q \leq Q$ are positive integers,
$\theta \in \T$ and $a \in \{1,\dots,q\}$ is coprime to $q$ and has
$|\theta - a/q| \leq 1/qQ$. Then
\begin{equation*}
|\wh{\Lambda_{N,d}}(\theta)| \ll d(\log
N)^4\left(\frac{N}{\sqrt{q}}+N^{4/5} + \sqrt{NQ}\right).
\end{equation*}
\end{corollary}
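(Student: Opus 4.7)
I first rewrite $\wh{\Lambda_{N,d}}(\theta)$ in terms of $\wh{\Lambda_{dN+1}}$. Substituting $n = dx+1$ and detecting the congruence $n\equiv 1\pmod d$ via the additive orthogonality $\mathbf{1}_{n\equiv 1(d)}=\frac{1}{d}\sum_{r=0}^{d-1}e(r(n-1)/d)$ gives
\[
\wh{\Lambda_{N,d}}(\theta) \;=\; \frac{e(\theta/d)}{d}\sum_{r=0}^{d-1}e(-r/d)\,\wh{\Lambda_{dN+1}}(\alpha_r), \qquad \alpha_r:=\frac{\theta-r}{d}.
\]
It therefore suffices to estimate $|\wh{\Lambda_{dN+1}}(\alpha_r)|$ for each $r$ and average.

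\textbf{Applying Vinogradov at the natural approximation.} For each $r$, the hypothesis $|\theta - a/q|\leq 1/(qQ)$ yields the ``natural'' approximation $\alpha_r\approx (a-rq)/(qd)$. In lowest terms this is $a_r/q_r$ with $q_r=qd/g_r$, where $g_r:=\gcd(a-rq,d)\mid d$ (using $(a,q)=1$), and with error $|\alpha_r - a_r/q_r|\leq 1/(qQd)$. This approximation satisfies the Dirichlet hypothesis of Theorem \ref{thm.minaorarcsestimate} at parameter $Q'=g_r Q$ provided $q_r\leq Q'$, i.e.\ $g_r\geq\sqrt{qd/Q}$. When this holds, Vinogradov gives
\[
|\wh{\Lambda_{dN+1}}(\alpha_r)|\;\ll\;(\log N)^{4}\Bigl(N\sqrt{dg_r/q}+(dN)^{4/5}+d\sqrt{Nq/g_r}\Bigr).
\]
Averaging $d^{-1}\sum_r$ and using the trivial divisor bounds $1\leq g_r\leq d$ (so that $\sum_r\sqrt{g_r}\leq d^{3/2}$ and $\sum_r 1/\sqrt{g_r}\leq d$), together with $d^{4/5}\leq d$ and $\sqrt{Nq}\leq\sqrt{NQ}$, reproduces exactly the three target terms $dN/\sqrt q$, $dN^{4/5}$, and $d\sqrt{NQ}$.

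\textbf{The main obstacle: exceptional residues.} The difficulty is the set $\mathcal R:=\{r : g_r<\sqrt{qd/Q}\}$, for which no valid $Q'$ makes the natural approximation a Dirichlet approximation. (Note that $\mathcal R=\emptyset$ whenever $qd\leq Q$, so this only matters in the regime $qd>Q$.) For each $r\in\mathcal R$ I would invoke Dirichlet's theorem to produce some alternative $a_r'/q_r'$ with $q_r'\leq Q$ and $|\alpha_r - a_r'/q_r'|\leq 1/(q_r'Q)$, and then apply Theorem \ref{thm.minaorarcsestimate} with this approximation. Bounding the resulting contribution within the target requires counting, for each $q_0\leq Q$, how many $r\in\mathcal R$ can have $q_r'=q_0$: since the frequencies $\alpha_r$ are equally spaced with gap $1/d$ and each reduced rational $a_0/q_0$ attracts at most $O(1+d/(q_0Q))$ of them within $1/(q_0Q)$, one gets $\#\{r\in\mathcal R : q_r'\leq q_0\}\ll q_0^2+dq_0/Q$. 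Combining this with the divisor estimate $|\mathcal R|\leq\sum_{g\mid d,\;g<\sqrt{qd/Q}}d/g$ (a consequence of $(a,q)=1$, which forces $\gcd(q,g)=1$ for any $g$ arising as some $g_r$), together with a dyadic decomposition over $q_0$, absorbs the exceptional contribution into the same three terms. Verifying this last consolidation is the most technical step of the argument.
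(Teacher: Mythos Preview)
Your reduction to estimating $|\wh{\Lambda_{dN+1}}(\alpha_r)|$ is exactly the one in the paper. Where you diverge is in how you apply Vinogradov's estimate: you try to use the ``natural'' approximation $a_r/q_r$ with $q_r=qd/g_r$, which forces a case analysis on whether $g_r\geq\sqrt{qd/Q}$ and leaves the exceptional set $\mathcal R$ to be handled by an unverified counting argument.

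The paper avoids all of this. For each $r$ it applies Dirichlet's theorem with parameter $Q':=2dQ$ (not $Q$ or $g_rQ$) to obtain an approximation $a'/q'$ with $q'\leq 2dQ$. The point is that this choice of $Q'$ is large enough to force $q'\geq q/2$ by comparing $a'/q'$ with the natural approximation $(a+mq)/(dq)$: the triangle inequality gives
\[
|a'dq-(a+mq)q'|\leq \tfrac12+q'/Q,
\]
and if $q'<q/2\leq Q/2$ this integer must vanish, whence $q\mid q'$ (using $(a+mq,q)=(a,q)=1$), a contradiction. With $q/2\leq q'\leq 2dQ$ in hand, Vinogradov applied at $a'/q'$ immediately produces all three target terms for every $r$, and there is nothing further to do.

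Your counting sketch for $\mathcal R$ is the genuine weak point. The bound $\#\{r:q_r'\leq q_0\}\ll q_0^2+dq_0/Q$ does not by itself control $\sum_{r\in\mathcal R}1/\sqrt{q_r'}$: summing dyadically gives a main term of order $Q^{3/2}$, and capping by $|\mathcal R|\leq d$ only yields $\sum\leq d$, whereas you need $\sum\ll d/\sqrt q$. In particular, a single exceptional $r$ with $q_r'=1$ already contributes $1$ to the sum, which exceeds the target whenever $q>d^2$; your divisor bound on $|\mathcal R|$ does not rule this out. So as written the argument is incomplete, and the missing idea is precisely the paper's: choose the Dirichlet parameter to be $2dQ$ so that the lower bound $q'\geq q/2$ falls out uniformly and no exceptional set arises.
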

\begin{proof}
Begin by noting that
\begin{equation*}
\wh{\Lambda_{N,d}}(\theta) = \frac{1}{d}\sum_{m=0}^{d-1}{\sum_{x
\leq dN+1}{\Lambda(x)e\left(\theta.\frac{x-1}{d} +
\frac{m(x-1)}{d}\right)}},
\end{equation*}
so
\begin{equation*}
|\wh{\Lambda_{N,d}}(\theta)| \leq
\frac{1}{d}\sum_{m=0}^{d-1}{|\wh{\Lambda_{dN+1}}((\theta+m)/d)|}.
\end{equation*}
Now, suppose that $m \in \{0,\dots,d-1\}$ and write
$\theta':=(\theta +m)/d$. We may apply Dirichlet's pigeon-hole
principle to get a positive integer $q' \leq Q':=2dQ$ and another
$a' \in \{1,\dots,q'\}$ with $(a',q')=1$ and such that $|\theta' -
a'/q'| \leq 1/q'Q'$. So
\begin{equation*}
|a'/q' - (a+mq)/dq| \leq |\theta' - a'/q'| + |\theta' - (a+mq)/dq|
\leq 1/q'Q' + 1/dqQ,
\end{equation*}
and hence
\begin{equation*}
|a'dq - (a+mq)q'| \leq 1/2 + q'/Q.
\end{equation*}
The left hand side is an integer and if $q' < q/2$ then it is zero.
This implies that $q | q'$ since $(q,a+mq)=1$, whence $q' \geq q$.
This contradiction means that $q' \geq q/2$. Now we just apply
Theorem \ref{thm.minaorarcsestimate} to the approximation $a'/q'$
(to $\theta'$) to get the result.
\end{proof}

\section{Some energy increment lemmas}\label{sec.energyincrementlemmas}

The main result of this section is an energy increment argument.
Such arguments are common, and an example from a very similar
context may be found in \cite{ES} and \cite{DRHB}.

We begin with a preliminary technical lemma.
\begin{lemma}\label{lem.basicenergyincrementlemma}
Suppose that $P$ is an arithmetic progression with common difference
$d$ and $A \subset \{1,\dots,N\}$ has $\alpha N$ elements. Suppose,
further, that
\begin{equation*}
\sum_{x \in \Z}{(1_A - \alpha 1_{[N]}) \ast 1_{P}(x)^2} \geq
c\alpha^2N|P|^2.
\end{equation*}
Then there is an integer $x' \in \Z$ such that
\begin{equation*}
1_A \ast 1_{P}(x') \geq \alpha(1+c)|P| + O(N^{-1}d|P|^2).
\end{equation*}
\end{lemma}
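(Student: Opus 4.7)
The plan is to translate the $L^2$ lower bound on $g := (1_A - \alpha 1_{[N]}) \ast 1_P$ into an $L^\infty$ lower bound on $1_A \ast 1_P$ by expanding a square and applying the elementary inequality $\max_x u(x) \geq \|u\|_2^2/\|u\|_1$, valid for any non-negative $u$. Writing $\pi := 1_{[N]} \ast 1_P$, the identity $1_A \ast 1_P = g + \alpha \pi$ gives
\[
\sum_x (1_A \ast 1_P)(x)^2 \;=\; \sum_x g(x)^2 \;+\; 2\alpha \sum_x g(x)\pi(x) \;+\; \alpha^2 \sum_x \pi(x)^2,
\]
and by hypothesis the first term is at least $c\alpha^2 N|P|^2$.

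For the third term, $\pi(x)$ equals $|P|$ on the ``interior'' $S := \{x \in \Z : x - P \subset [N]\}$, which has size $N - (|P|-1)d$, so $\sum \pi^2 \geq |P|^2|S| \geq N|P|^2 - |P|^3 d$. For the cross term I would exploit the mean-zero identity $\sum_x g(x) = (|A| - \alpha N)|P| = 0$ to rewrite
\[
\sum_x g(x)\pi(x) \;=\; -\sum_x g(x)\bigl(|P| - \pi(x)\bigr).
\]
The function $|P| - \pi$ vanishes on $S$ and is supported on the ``boundary'' $T := \supp(\pi) \setminus S$; a direct count of how $P$ sits against $[N]$ shows $|T| = O(|P|d)$, and combined with $|g(x)| \leq |P|$ and $0 \leq |P|-\pi(x) \leq |P|$ pointwise, the cross term is therefore $O(|P|^3 d)$.

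Combining the three contributions yields $\sum_x (1_A \ast 1_P)(x)^2 \geq (1+c)\alpha^2 N|P|^2 - O(\alpha |P|^3 d)$. Since $1_A \ast 1_P \geq 0$ and $\sum_x 1_A \ast 1_P(x) = |A||P| = \alpha N|P|$, the bound $\max u \geq \sum u^2 / \sum u$ produces an $x'$ with
\[
1_A \ast 1_P(x') \;\geq\; (1+c)\alpha|P| - O\bigl(|P|^2 d/N\bigr),
\]
which is the conclusion.

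The main delicate point is the cross-term estimate: without invoking the mean-zero property of $g$ one would only have the crude bound $O(\alpha N|P|^2)$, which is of the same order as the main $L^2$ gain and would nullify the argument. Once that is done, everything else is a routine collection of error terms.
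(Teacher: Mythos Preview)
Your argument is correct and follows the same skeleton as the paper's proof: expand the square to pass from $\sum g^2$ to $\sum (1_A\ast 1_P)^2$, control the cross term and the $\alpha^2\sum\pi^2$ term to within $O(\alpha d|P|^3)$, and then apply $\max u \geq \|u\|_2^2/\|u\|_1$ with $\|1_A\ast 1_P\|_1 = \alpha N|P|$. The only difference is in the bookkeeping for the cross term: the paper rewrites $\sum_x (1_A\ast 1_P)(x)\,\pi(x)$ via the adjoint as $\sum_x 1_A(x)\,(1_{[N]}\ast 1_P\ast 1_{-P})(x)$ and uses that the triple convolution equals $|P|^2$ on $[N]$ up to an edge error, whereas you use $\sum g = 0$ to localise $\sum g\pi$ to the boundary set $T$. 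Both routes give the same $O(\alpha d|P|^3)$ error and the same final inequality.
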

\begin{proof}
First note that
\begin{eqnarray*}
\alpha\sum_{x \in \Z}{1_A \ast 1_{P}(x)1_{[N]}\ast1_{P}(x)} & = &
\alpha \sum_{x \in \Z}{1_A(x) 1_{[N]} \ast 1_{P} \ast 1_{P}(x)}\\
& = & \alpha^2N|P|^2 + O(\alpha d |P|^3)
\end{eqnarray*}
and
\begin{equation*}
\alpha^2\sum_{x \in \Z}{1_{[N]} \ast 1_{P}(x)^2}  = \alpha^2N|P|^2 +
O(\alpha^2 d|P|^3).
\end{equation*}
Expanding the hypothesis it follows that
\begin{equation*}
\sum_{x \in \Z}{1_A \ast 1_{P}(x)^2} \geq (1+c)\alpha^2N|P|^2 +
O(\alpha d|P|^3).
\end{equation*}
Now H\"{o}lder's inequality yields
\begin{equation*}
\sup_{x \in \Z}{1_A \ast 1_P(x)}\alpha N|P| \geq \sum_{x \in \Z}{1_A
\ast 1_{P}(x)^2},
\end{equation*}
from which the result follows.
\end{proof}
The next result is a standard form of the energy increment
argument. It may be useful to recall the definition of the intervals
$\mathfrak{M}_{a,q,\eta}$ from \S\ref{sec.notation} before reading
further.
\begin{proposition}\label{prop.mainenergyincrementresult}
Suppose that $\eta>0$, $N$ and $q$ are positive integers, and $A
\subset \{1,\dots,N\}$ has $\alpha N$ elements. Write
\begin{equation*}
E_{A,q,\eta}:=\alpha^{-1}|A|^{-1}\int_{\theta \in
\mathfrak{M}_{q,\eta}}{|(1_A-\alpha1_{[N]})^\wedge(\theta)|^2d\theta}.
\end{equation*}
Then there is a arithmetic progression $P$ with common difference
$q$ and $|P| \gg q^{-1}\min \{\eta^{-1},E_{A,q,\eta}|A|\}$ such that
$|A \cap P| \geq \alpha(1+E_{A,q,\eta}/4)|P|$.
\end{proposition}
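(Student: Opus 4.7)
The strategy is to apply Lemma \ref{lem.basicenergyincrementlemma} to an arithmetic progression $P$ with common difference $q$ whose Fourier transform is uniformly large on $\mathfrak{M}_{q,\eta}$, so that the $L^2$-mass of $(1_A-\alpha 1_{[N]})^\wedge$ on $\mathfrak{M}_{q,\eta}$, which is precisely $\alpha^2 N E_{A,q,\eta}$ by definition, converts via Parseval into a lower bound on $\sum_x((1_A-\alpha 1_{[N]})\ast 1_P)(x)^2$.

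Concretely, set $L := \lfloor c_0 q^{-1}\min\{\eta^{-1}, E_{A,q,\eta}|A|\}\rfloor$ for a small absolute constant $c_0 \in (0,1/2]$ to be fixed, and take $P = \{t, t+q, \dots, t+(L-1)q\}$ for any integer $t$. For $\theta \in \mathfrak{M}_{q,\eta}$ write $\theta = a/q + \kappa$ with $a$ an integer and $|\kappa|\leq\eta$. Since $e(jqa/q)=1$,
\begin{equation*}
|\wh{1_P}(\theta)| = \Bigl|\sum_{j=0}^{L-1}e(jq\kappa)\Bigr| = \frac{|\sin(\pi Lq\kappa)|}{|\sin(\pi q\kappa)|} \geq \frac{2L}{\pi},
\end{equation*}
the last inequality using $\sin(\pi y)\geq 2y$ for $y\in[0,1/2]$ together with $|Lq\kappa|\leq Lq\eta\leq c_0\leq 1/2$. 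Combining Parseval with $\wh{f\ast g}=\wh{f}\wh{g}$ yields
\begin{equation*}
\sum_{x\in\Z}\bigl((1_A-\alpha 1_{[N]})\ast 1_P\bigr)(x)^2 \geq \frac{4|P|^2}{\pi^2}\int_{\mathfrak{M}_{q,\eta}}|(1_A-\alpha 1_{[N]})^\wedge(\theta)|^2\,d\theta = \frac{4}{\pi^2}\alpha^2 N|P|^2 E_{A,q,\eta}.
\end{equation*}

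Lemma \ref{lem.basicenergyincrementlemma} now supplies some $x'\in\Z$ with $1_A\ast 1_P(x') \geq \alpha(1+(4/\pi^2)E_{A,q,\eta})|P| + O(N^{-1}q|P|^2)$; since $|P|\leq c_0 q^{-1}E_{A,q,\eta}\alpha N$, the error term is $O(c_0\alpha E_{A,q,\eta}|P|)$, which for $c_0$ smaller than an absolute constant is absorbed to leave $1_A\ast 1_P(x')\geq \alpha(1+E_{A,q,\eta}/4)|P|$. The set $x'-P$ is then an arithmetic progression of common difference $q$ and cardinality $|P| \gg q^{-1}\min\{\eta^{-1}, E_{A,q,\eta}|A|\}$ on which $A$ has the required density. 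The only real point that required care is the dual calibration of $L$: the lower bound $|\wh{1_P}|\gg|P|$ on all of $\mathfrak{M}_{q,\eta}$ forces $Lq\eta\ll 1$, while absorbing the error term in Lemma \ref{lem.basicenergyincrementlemma} forces $|P|\ll q^{-1}E_{A,q,\eta}|A|$, and these two constraints together explain exactly the minimum in the stated bound on $|P|$.
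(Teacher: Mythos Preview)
Your proof is correct and follows essentially the same approach as the paper: choose a progression $P$ of common difference $q$ short enough that $|\wh{1_P}|\geq 2|P|/\pi$ throughout $\mathfrak{M}_{q,\eta}$, apply Parseval to convert the energy hypothesis into the $\ell^2$ lower bound required by Lemma~\ref{lem.basicenergyincrementlemma}, and then choose the length so that the $O(N^{-1}q|P|^2)$ error is absorbed. The only cosmetic differences are that the paper uses a symmetric progression of length $2M+1$ and defers the choice of $M$ to the end, whereas you fix $L$ at the outset; the substance is identical.
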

\begin{proof}
Let $P$ be the progression of common difference $q$ and length
$2M+1$ centred about the origin; we shall optimize for $M$ later. In
this case (by scaling the Dirichlet kernel) we have
\begin{equation*}
\wh{1_P}(\theta)=\frac{\sin (|P|\pi q \theta)}{\sin (\pi q \theta)},
\end{equation*}
with the usual convention at $\theta=0$. Now suppose that $\theta
\in \mathfrak{M}_{q,\eta}$, so that there is some integer $a$ with
$|\theta - a/q| \leq \eta$. Thus, writing $\kappa := \theta - a/q$
and recalling the inequalities $|\sin x| \geq 2|x|/\pi$ if $|x| \leq
\pi/2$ and $|\sin x| \leq |x|$, we have
\begin{equation*}
|\wh{1_P}(\theta)| = \left|\frac{\sin (|P|\pi q \kappa)}{\sin (\pi q
\kappa)}\right|\geq \frac{|2|P|q \kappa|)}{|\pi q \kappa|} =
\frac{2|P|}{\pi}
\end{equation*}
provided $|P|q\eta \leq 1/2$. It follows that
\begin{equation*}
\int_{\theta \in
\mathfrak{M}_{q,\eta}}{|(1_A-\alpha1_{[N]})^\wedge(\theta)|^2|\wh{1_P}(\theta)|^2d\theta}
\geq \frac{4}{\pi^2}E_{A,q,\eta}\alpha |A||P|^2.
\end{equation*}
Now the left hand side is certainly dominated by the same integral
without the restriction of the domain of integration and hence, by
Parseval's theorem applied to the unrestricted domain, we have
\begin{equation*}
\sum_{x \in \Z}{(1_A - \alpha 1_{[N]}) \ast 1_P(x)^2} \geq
\frac{4}{\pi^2}E_{A,q}|A|^2|P|^2.
\end{equation*}
Now we may apply Lemma \ref{lem.basicenergyincrementlemma} to get
some $x' \in \Z$ such that
\begin{equation*}
1_A \ast 1_P(x') \geq \left(1+
\frac{4E_{A,q,\eta}}{\pi^2}\right)\alpha |P| + O(N^{-1}q|P|^2).
\end{equation*}
It follows that there is a choice of $M \gg q^{-1}\min
\{\eta^{-1},E_{A,q}|A|\}$ for which
\begin{equation*}
1_A \ast 1_P(x') \geq \left(1+ \frac{E_{A,q,\eta}}{4}\right)\alpha
|P|.
\end{equation*}
\end{proof}
For our work we shall use the following corollary which is designed
specifically for the problem we are considering.
\begin{corollary}\label{cor.specializedenergyincrementlemma}
Suppose that $N$ is a positive integer, $A \subset \{1,\dots,N\}$
has $\alpha N$ elements, $Q'\geq 1$ and $Q:=N/Q'$. For each $q$ with
$1 \leq q \leq Q'$ write
\begin{equation*}
E_{A,q}^*:=\alpha^{-1}|A|^{-1}\int_{\theta \in
\mathfrak{M}_q^*}{|(1_A-\alpha1_{[N]})^\wedge(\theta)|^2d\theta},
\end{equation*}
and suppose that
\begin{equation*}
\sum_{q=1}^{Q'}{\frac{1}{\phi(q)}E_{A,q}^*} \geq c.
\end{equation*}
Then there is an arithmetic progression $P$ with common difference
$q \leq Q'$ and $|P| \gg Q'^{-1}N\min\{Q'^{-1},\alpha c\}$ such that
$|A \cap P| \geq \alpha(1+2^{-5}c)|P|$.
\end{corollary}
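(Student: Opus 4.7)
The plan is to reduce to Proposition~\ref{prop.mainenergyincrementresult} by pigeonholing on the hypothesis to identify a single denominator $q_0 \leq Q'$ at which to apply the basic energy increment.

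First I would extract, from $\sum_{q \leq Q'} E_{A,q}^*/\phi(q) \geq c$, some $q_0 \leq Q'$ for which a comparable amount of the energy is captured inside a single region $\mathfrak{M}_{q_0, \eta}$. The natural radius to use is $\eta = 1/Q$: in the regime $Q' \leq \sqrt{N/2}$ (which is the only interesting one) the Farey separation $|a/q - a'/q'| \geq 1/(qq')$ renders the intervals $\{\mathfrak{M}_{q'}^* : q' \leq Q'\}$ pairwise disjoint, while for any $q_0$ one has the covering $\mathfrak{M}_{q_0, 1/Q} \supseteq \bigcup_{q' \mid q_0,\ q' \leq Q'} \mathfrak{M}_{q'}^*$ (since each reduced fraction $a'/q'$ with $q' \mid q_0$ is itself an $a/q_0$, and the radius $1/Q$ dominates each $1/(q'Q)$). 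Therefore $E_{A, q_0, 1/Q} \geq \sum_{q' \mid q_0,\ q' \leq Q'} E_{A, q'}^*$, and an averaging over $q_0 \leq Q'$, matched to the $1/\phi(q)$-weighting in the hypothesis, should yield some $q_0$ with $E_{A, q_0, 1/Q} \gg c$.

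Second I would apply Proposition~\ref{prop.mainenergyincrementresult} with this $q_0$ and $\eta = 1/Q$. The output is an arithmetic progression $P$ of common difference $q_0 \leq Q'$, length $|P| \gg q_0^{-1}\min\{Q,\ E_{A, q_0, 1/Q}|A|\}$, and density at least $\alpha(1 + E_{A, q_0, 1/Q}/4)|P|$. Using $q_0^{-1} \geq Q'^{-1}$ and $E_{A, q_0, 1/Q} \gg c$ one unfolds this to $|P| \gg Q'^{-1}N\min\{Q'^{-1}, \alpha c\}$ and to the density-increment factor $1 + 2^{-5}c$ claimed.

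The main obstacle is the averaging step. A naive pigeonhole on the $Q'$ summands, using $\sum_{q \leq Q'} 1/\phi(q) = O(\log Q')$, would give only $\max_q E_{A,q}^* \gg c/\log Q'$, which is weaker than the absolute-constant factor $2^{-5}$ demanded by the conclusion. To avoid this logarithmic loss one must combine the divisor-covering inclusion above with a choice of $q_0 \leq Q'$ that reflects the $1/\phi(q)$-weighting of the hypothesis---for example a $q_0$ with many small prime factors, so that $\sum_{q' \mid q_0} E_{A, q'}^*$ absorbs the bulk of the weighted mass in a single enlarged region $\mathfrak{M}_{q_0, 1/Q}$. Making this selection quantitative, so that the aggregated energy really is of order $c$ with absolute constants, is the delicate part of the argument.
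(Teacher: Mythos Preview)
Your overall strategy---reduce to Proposition~\ref{prop.mainenergyincrementresult} via the divisor covering $\mathfrak{M}_{q_0,1/Q}\supseteq\bigcup_{q'\mid q_0}\mathfrak{M}_{q'}^*$ and then pigeonhole for a good $q_0$---is exactly the paper's, and you have correctly isolated the obstacle: a naive pigeonhole loses $\log Q'$.

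Where your sketch stops short is in the resolution of this obstacle. Your suggestion of picking a single $q_0$ ``with many small prime factors'' is not what the paper does, and it is not clear such a choice can be made to work with an absolute constant. The paper instead performs a \emph{weighted} average: it computes
\[
\sum_{q\leq Q'}\frac{q}{\phi(q)}\,E_{A,q,1/Q}
=\sum_{q\leq Q'}\frac{q}{\phi(q)}\sum_{q'\mid q}E_{A,q',1/Q}^{*}
=\sum_{q'\leq Q'}E_{A,q',1/Q}^{*}\sum_{h\leq Q'/q'}\frac{q'h}{\phi(q'h)},
\]
uses the elementary lower bound $\sum_{h\leq Q'/q'}\frac{q'h}{\phi(q'h)}\geq \frac{q'}{\phi(q')}\cdot\lfloor Q'/q'\rfloor\geq \frac{Q'}{2\phi(q')}$ to recover exactly the weighted hypothesis, and then pigeonholes against the upper bound $\sum_{q\leq Q'}q/\phi(q)\leq 4Q'$. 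This gives some $q_0\leq Q'$ with $E_{A,q_0,1/Q}\geq c/8$, with no logarithmic loss. The point is that the weight $q/\phi(q)$, unlike $1/\phi(q)$, has bounded average, while on the divisor side it reproduces the $1/\phi(q')$ weighting of the hypothesis. Once you supply this computation, your proof is the paper's.
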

\begin{proof}
For $\eta>0$ we define $E_{A,q,\eta}$ as in Proposition
\ref{prop.mainenergyincrementresult} and write
\begin{equation*}
I_{A,a,q,\eta}:=\alpha^{-1}|A|^{-1}\int_{\theta \in
\mathfrak{M}_{a,q,\eta}}{|(1_A-\alpha1_{[N]})^\wedge(\theta)|^2d\theta}.
\end{equation*}
Begin by noting that
\begin{eqnarray*}
\sum_{q=1}^{Q'}{\frac{q}{\phi(q)}E_{A,q,Q^{-1}}}  & = &
\sum_{q=1}^{Q'}{\frac{q}{\phi(q)}\sum_{r=1}^{q}{I_{A,r,q,Q^{-1}}}}\\
& = &
\sum_{q=1}^{Q'}{\frac{q}{\phi(q)}\sum_{q'h=q}{\sum_{\genfrac{}{}{0pt}{}{r'=1}{(r,q')=1}}^{q'}{I_{A,r'h,q'h,Q^{-1}}}}},
\end{eqnarray*}
but this last expression is equal to
\begin{equation*}
\sum_{q=1}^{Q'}{\frac{q}{\phi(q)}\sum_{q'h=q}{E_{A,q',Q^{-1}}^*}} =
\sum_{q'=1}^{Q'}{E_{A,q',Q^{-1}}^*\sum_{h=1}^{Q'/q'}{\frac{q'h}{\phi(q'h)}}}.
\end{equation*}
Now we also have
\begin{equation*}
\sum_{h=1}^{Q'/q'}{\frac{q'h}{\phi(q'h)}} \geq
\frac{q'}{\phi(q')}\sum_{h=1}^{Q'/q'}{1} \geq \frac{Q'}{2\phi(q')},
\end{equation*}
and so
\begin{equation*}
\sum_{q=1}^{Q'}{\frac{q}{\phi(q)}E_{A,q,Q^{-1}}}  \geq
\frac{Q'}{2}\sum_{q'=1}^{Q'}{\frac{1}{\phi(q')}E_{A,q',Q^{-1}}^*}
\geq \frac{Q'c}{2}
\end{equation*}
by hypothesis and the fact that $1/qQ \leq 1/Q$. But, it is well
known (see, for example, the book \cite{HLMRCV}) that
\begin{equation*}
\sum_{q=1}^{Q'}{\frac{q}{\phi(q)}} \leq 4Q',
\end{equation*}
so, by a trivial instance of H\"{o}lder's inequality, we conclude
that there is some $q$ with $1 \leq q \leq Q'$ such that
$E_{A,q,Q^{-1}} \geq c/8$. We now apply Proposition
\ref{prop.mainenergyincrementresult} to get the result.
\end{proof}

\section{The main iteration lemma}\label{sec.mainiterationlemma}

Our main argument is iterative -- although the eventual
proof will be by maximality -- and the central lemma
follows. Essentially it says that if none of the various input
parameters is too small and $A-A$, that is the set of differences
between elements of $A$, is disjoint from the set of all numbers of the form $(p-1)/d$,
then $A$ must have much larger density on a reasonable
sub-progression.

\begin{lemma}[Iteration lemma]\label{lem.mainiterationlemma}
Suppose that $D_1 \geq D_0 \geq 2$, $A \subset \{1,\dots,N\}$ has
$\alpha N$ elements and either
\begin{enumerate}
\item $(D_1,D_0)$ is exceptional and $d \leq D_1$ is such that $d_D |d$;
\item or $(D_1,D_0)$ is unexceptional and $d \leq D_0$.
\end{enumerate}
Then there are absolute constants $c_8,c_9,c_{10}>0$ such that at
least one of the following holds.
\begin{enumerate}
\item \emph{(Density increment)} There is an integer $d'$ such that $d'
=O(\alpha^{-2})$ and a progression $P$ of common difference $d'$ and
length at least $(c_9\alpha /d\log N)^8N$ such that $|A\cap P| \geq
\alpha(1+c_8)|P|$.
\item \label{case.iterationlemmacase2} \emph{(Structure in difference set)} $A-A$ contains a number of the form $\frac{p-1}{d}$ with $p$ a prime.
\item \label{case.iterationlemmacase3} \emph{($N$ is small)} $N \leq O(\exp(c_{10}\log^2D_1))$.
\item \label{case.iterationlemmacase4} \emph{($d$ is large or $\alpha$ is small)} \begin{enumerate}
  \item $(D_1,D_0)$ is exceptional, and $d^{-1} = O(D_1^{-c_{10}})$ or $\alpha = O(D_1^{-c_{10}})$;
  \item or $(D_1,D_0)$ is unexceptional, and $d^{-1} =
  O(D_0^{-c_{10}}/\log^2
D_1)$ or $\alpha = O(D_0^{-c_{10}}/\log^2 D_1)$.
\end{enumerate}
\item \label{case.iterationlemmacase5} \emph{($\alpha$ is small)} $\alpha = O(D_1^{-c_{10}})$.
\end{enumerate}
\end{lemma}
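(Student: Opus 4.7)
The plan is to assume that cases (ii)--(v) all fail and derive (i) by the standard Fourier energy-increment argument applied to
\[
T := \sum_{x \in \Z} (1_A \ast 1_{-A})(x)\,\Lambda_{N,d}(x).
\]
If (ii) fails then the only nonzero contributions to $T$ come from $x$ with $dx+1$ a proper prime power; since there are $O(\sqrt{dN})$ such $x$ in $[1,N]$, each weighted by $O(\log N)$, and $(1_A\ast 1_{-A})(x)\leq |A|$, one gets $T = O(\alpha N\sqrt{dN}\log N)$. On the other hand, by Parseval $T = \int_\T |\widehat{1_A}(\theta)|^2\,\widehat{\Lambda_{N,d}}(\theta)\,d\theta$, so the strategy is to expand this Fourier integral and show that the implied ``main term'' exceeds $T$ by so much that $|\widehat{1_A - \alpha 1_{[N]}}|^2$ must have significant mass on major arcs.

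Writing $1_A = \alpha 1_{[N]} + f$ with $f := 1_A - \alpha 1_{[N]}$ splits $T$ into a main term $\alpha^2\int|\widehat{1_{[N]}}|^2\widehat{\Lambda_{N,d}}$, a cross term $2\alpha\Re\int\widehat{f}\,\overline{\widehat{1_{[N]}}}\,\widehat{\Lambda_{N,d}}$, and an $L^2$-in-$f$ term. Unfolding the main term via $(1_{[N]}\ast 1_{-[N]})(x) = (N-|x|)_+$ and Abel summation yields $\alpha^2\sum_{z<N}\psi(dz+1;d,1)$, and then Proposition~\ref{prop.majorarcsestimateforexceptionalpairs} (respectively Proposition~\ref{prop.majorarcsestimateforunexceptionalpairs}) lower-bounds this by a constant multiple of $\alpha^2 N\,|\widehat{\Lambda_{N,d}}(0)|$, together with $|\widehat{\Lambda_{N,d}}(0)| \gg N/\phi(d)$ in either regime. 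In the exceptional case the hypothesis $d_D\mid d$ and the bound $(1-\beta_D)^{-1} = O(d_D^{1/2}\log^2 d_D)$ are precisely what prevents the $(dN)^{\beta_D}/\beta_D$ correction from cancelling the $dN$ main term; the failure of (iii) absorbs the remaining $\exp(-c\log N/(\sqrt{\log N}+\log D_1))(\log D_1)^2$ errors.

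Next I decompose $\T$ via Dirichlet at an appropriately chosen parameter $Q'$ (with $Q := N/Q'$), so that the major arcs become $\mathfrak{M}_q^*$ for $q \leq Q'$. On the minor arcs, Corollary~\ref{cor.specializedminaorarcsestimate} bounds $|\widehat{\Lambda_{N,d}}(\theta)|$ by $d(\log N)^4(N/\sqrt{Q'}+N^{4/5}+\sqrt{NQ})$; multiplying by $\|f\|_2^2\leq |A|$ (and using Cauchy--Schwarz for the cross term) controls the minor-arc contribution, and the failure of (iv) and (v) gives the polynomial control of $d$ and $\alpha^{-1}$ needed to dominate it by a small fraction of the main term. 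On the major arcs, Proposition~\ref{prop.majorarcsestimateforexceptionalpairs} or \ref{prop.majorarcsestimateforunexceptionalpairs} replaces $|\widehat{\Lambda_{N,d}}(\theta)|$ by $|\widehat{\Lambda_{N,d}}(0)|/\phi(q) + O((1+|\kappa|N)\cdot(\text{error}))$, with $(1+|\kappa|N)$ integrating to $O(1)$ across each $\mathfrak{M}_{a,q}$ of width $2/qQ$. Reassembling, the inequality ``main term $\gg T$ plus negligible pieces'' forces
\[
\sum_{q=1}^{Q'}\frac{E^*_{A,q}}{\phi(q)} \gg 1,
\]
and Corollary~\ref{cor.specializedenergyincrementlemma} then produces a progression $P$ of common difference $q\leq Q'$ and length $|P|\gg Q'^{-1}N\min(Q'^{-1},\alpha)$ on which $|A\cap P|\geq \alpha(1+c_8)|P|$, namely case (i).

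The main obstacle is the delicate bookkeeping needed to tune the cut-off $Q'$ and the polynomial exponent $c_{10}$ in (iv) and (v) so that the $(\log D_1)^2$ losses from the major-arc estimates, the $d(\log N)^4$ losses from Corollary~\ref{cor.specializedminaorarcsestimate}, and the $\phi(d)/d$ loss in the main-term lower bound are all swallowed by the main term in both regimes: in the exceptional one, where only $|\widehat{\Lambda_{N,d}}(0)|\gg N/\phi(d)$ is available and $d$ is constrained by $d_D\mid d$ and $dq\leq D_1$; and in the unexceptional one, where the stronger $|\widehat{\Lambda_{N,d}}(0)|\gg dN/\phi(d)$ is available but $d$ is restricted to $d\leq D_0$. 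The admissible ranges differ, so the constants must be tuned separately, but the Fourier-analytic structure is identical in the two regimes and the output progression has common difference $O(\alpha^{-2})$ and length $\gg (\alpha/(d\log N))^{O(1)}N$ as claimed.
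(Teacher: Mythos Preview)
Your outline follows the paper's strategy closely—the circle-method split, the major/minor arc estimates from Propositions~\ref{prop.majorarcsestimateforexceptionalpairs}--\ref{prop.majorarcsestimateforunexceptionalpairs} and Corollary~\ref{cor.specializedminaorarcsestimate}, and the final appeal to Corollary~\ref{cor.specializedenergyincrementlemma} are all as in the paper. There is, however, one genuine missing idea: the paper does \emph{not} weight by the full-length $\Lambda_{N,d}$ but by the truncated $\Lambda_{N',d}$ with $N':=\lfloor c\alpha N\rfloor$ for a small absolute constant $c$, and this truncation is exactly what makes the cross term harmless.

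Concretely, writing $f:=1_A-\alpha 1_{[N]}$, your cross term in physical space is
\[
\alpha\,\bigl\langle 1_{[N]}\ast f_{-}+f\ast 1_{-[N]},\,\Lambda_{N,d}\bigr\rangle
\;=\;\alpha\sum_{x=1}^{N}\Lambda_{N,d}(x)\Bigl(\sum_{z\le N-x}f(z)-\sum_{y\le x}f(y)\Bigr),
\]
and these partial sums of $f$ can be as large as $\alpha N$ (take $A$ an initial segment). Hence the cross term is only $O(\alpha^{2}N\,|\widehat{\Lambda_{N,d}}(0)|)$, the \emph{same} order as your main term; your Cauchy--Schwarz bound on the minor arcs says nothing about its major-arc part, so ``main term $\gg T$ plus negligible pieces'' does not isolate $\sum_q E^*_{A,q}/\phi(q)$. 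The paper avoids this by considering $\langle(1_A-\alpha 1_{[N]})\ast(1_{-A}-\alpha 1_{-[N]}),\Lambda_{N',d}\rangle$ directly and computing the four pieces in physical space: with $\Lambda_{N',d}$ supported only on $[1,N']$, one has e.g.\ $\langle 1_{[N]}\ast 1_{-A},\Lambda_{N',d}\rangle=\alpha N\,\widehat{\Lambda_{N',d}}(0)+O(N'\,|\widehat{\Lambda_{N',d}}(0)|)$, and now the error $O(N')=O(c\alpha N)$ is genuinely small against $\alpha N$.

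A smaller point: a single cut-off $Q'$ cannot both make Vinogradov effective (which forces $Q'\gg d^{4}(\log N)^{8}\alpha^{-2}$) and yield $d'=O(\alpha^{-2})$ from Corollary~\ref{cor.specializedenergyincrementlemma}. The paper uses a two-stage split: first at $Q'\asymp d^{4}(\log N)^{8}\alpha^{-2}$ for the minor arcs, then at $Q''\asymp\alpha^{-2}$, pruning the intermediate range $Q''\le q\le Q'$ via $\phi(q)\gg\sqrt{q}$ and Parseval before invoking the energy-increment corollary.
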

\begin{proof}
Throughout the proof we shall introduce constants $c,c',c'',\dots$
which will each be optimized at some later point and will end up
being absolute positive constants. The reason for this slightly
unappealing approach is that we have not been explicit about any of
the constants in the error terms we have so far produced.

Let $c>0$ be some constant to be optimized later. Either $N \leq
c\alpha^{-1}$ (and we shall, once we have shown we can choose $c$ to
be absolute, be in outcome (\ref{case.iterationlemmacase3}) or
(\ref{case.iterationlemmacase5})) or the integer $N'=\lfloor
c\alpha N\rfloor $ has $N' \geq 1$.

Irrespective of whether $(D_1,D_0)$ is exceptional or unexceptional
we have, from Propositions
\ref{prop.majorarcsestimateforexceptionalpairs} or
\ref{prop.majorarcsestimateforunexceptionalpairs}, that
\begin{equation*}
|\wh{\Lambda_{N',d}}(0)| \gg \frac{N'}{\phi(d)} + O\left(N'D_1^2
\exp\left(-\frac{c_6\log N'}{\sqrt{\log N'} + \log
D_1}\right)\right).
\end{equation*}
Now $\phi(d) \leq d \leq D_1$ so either $N \leq c^{-1}\alpha^{-1}
\exp(O(\log^2 D_1))$ (and we shall be in outcome
(\ref{case.iterationlemmacase3}) or (\ref{case.iterationlemmacase5})
again) or we have the estimate
\begin{equation*}
|\wh{\Lambda_{N',d}}(0)| \gg \frac{N'}{\phi(d)}.
\end{equation*}
Write $I$ for the interval $[N]$ and consider the inner product
\begin{equation}\label{eqn.maininnerproduct}
\langle (1_A - \alpha 1_I) \ast (1_{-A} - \alpha 1_{-I}),
\Lambda_{N',d}\rangle.
\end{equation}
If $A-A$ contains a number of the form $(p-1)/d$ for some prime $p$
then we are in outcome (\ref{case.iterationlemmacase2}) of the
lemma; consequently assume that it does not. In this case the only
integers $x$ which support a contribution in the inner product
$\langle 1_A \ast 1_{-A},\Lambda_{N',d}\rangle $ are those for which
$dx+1$ is a prime power with the power strictly bigger than one.
There are at most $O(\sqrt{dN'})$ such integers and furthermore
$\|\Lambda_{N',d}\|_{\ell^\infty(\Z)} = O(\log dN')$ and $\|1_A \ast
1_{-A}\|_{\ell^\infty(\Z)} \leq \alpha N$, whence
\begin{equation*}
\langle 1_A \ast 1_{-A},\Lambda_{N',d}\rangle = O(\alpha
N\sqrt{dN'}\log dN').
\end{equation*}
We conclude that
\begin{equation*}
 \langle 1_A \ast 1_{-A},\Lambda_{N',d}\rangle =O(\alpha |\wh{\Lambda_{N',d}}(0)|N')
\end{equation*}
unless $N \leq \exp(O(\log \alpha^{-1}D_1c^{-1}))$ (in which case we
shall be in outcome (\ref{case.iterationlemmacase3}) or
(\ref{case.iterationlemmacase5}) again).

The other terms arising from expanding out
(\ref{eqn.maininnerproduct}) are more easily handled:
\begin{equation*}
\langle 1_I \ast 1_{-A},\Lambda_{N',d} \rangle =
\wh{\Lambda_{N',d}}(0)\alpha N + O(|\wh{\Lambda_{N',d}}(0)|N'),
\end{equation*}
\begin{equation*}
\langle 1_A \ast 1_{-I},\Lambda_{N',d} \rangle =
\wh{\Lambda_{N',d}}(0)\alpha N + O(|\wh{\Lambda_{N',d}}(0)|N')
\end{equation*}
and
\begin{equation*}
\langle 1_I \ast 1_{-I},\Lambda_{N',d} \rangle =
\wh{\Lambda_{N',d}}(0)N + O(|\wh{\Lambda_{N',d}}(0)|N').
\end{equation*}
Thus it follows that
\begin{equation*}
\langle (1_A - \alpha 1_I) \ast (1_{-A} - \alpha 1_{-I}),
\Lambda_{N',d}\rangle = \alpha^2N\wh{\Lambda_{N',d}}(0)(-1 + O(c)).
\end{equation*}
Now we pick $c \gg 1$ such that
\begin{equation*}
|\langle (1_A - \alpha 1_I) \ast (1_{-A} - \alpha 1_{-I}),
\Lambda_{N',d}\rangle| \gg \alpha^2N|\wh{\Lambda_{N',d}}(0)|,
\end{equation*}
and apply Plancherel's theorem to the left hand side to get a
Fourier space expression
\begin{equation*}
\int{|(1_A -
\alpha1_{I})^\wedge(\theta)|^2|\wh{\Lambda_{N',d}}(\theta)|d\theta}
\gg \alpha^2N|\wh{\Lambda_{N',d}}(0)|.
\end{equation*}

Let $c'>0$ be another constant to be optimized later. Write
\begin{equation*}
Q':=\frac{d^4\log^8 N}{c'^2\alpha^2} \textrm{ and } Q:= N'/Q',
\end{equation*}
and
\begin{equation*}
\mathfrak{M}':=\bigcup_{q < Q'}{\mathfrak{M}_q^*} \textrm{ and }
\mathfrak{M}:=\bigcup_{Q' \leq q \leq Q}{\mathfrak{M}_q^*}.
\end{equation*}
By Dirichlet's pigeon-hole principle $\T=\mathfrak{M} \cup
\mathfrak{M}'$ and so by the triangle inequality we have
\begin{eqnarray*}
\int{|(1_A -
\alpha1_{I})^\wedge(\theta)|^2|\wh{\Lambda_{N',d}}(\theta)|d\theta}
& \leq & \int_{\theta \in \mathfrak{M}'}{|(1_A -
\alpha1_{I})^\wedge(\theta)|^2|\wh{\Lambda_{N',d}}(\theta)|d\theta}\\
& & + \int_{\theta \in \mathfrak{M}}{|(1_A -
\alpha1_{I})^\wedge(\theta)|^2|\wh{\Lambda_{N',d}}(\theta)|d\theta}.
\end{eqnarray*}
Corollary \ref{cor.specializedminaorarcsestimate} tells us that
either $N' \leq d$ (and we are in the outcome
(\ref{case.iterationlemmacase3}) or
(\ref{case.iterationlemmacase5}), since $d \leq D_1$) or else
\begin{equation*}
|\wh{\Lambda_{N',d}}(\theta)| \ll d\log^4
N'\left(\frac{N'}{\sqrt{q}}+N'^{4/5} + \sqrt{N'Q}\right),
\end{equation*}
for $\theta \in \mathfrak{M}_q^*$ and $q \leq Q$. So, if $\theta \in
\mathfrak{M}$ then
\begin{equation*}
|\wh{\Lambda_{N',d}}(\theta)| \ll \frac{N'}{d}\left(c'\alpha +
d^2N'^{-1/10} \right).
\end{equation*}
Once again, either $N' \leq  c'^{-10}d^{20}\alpha^{-10}$ (and we are
in the outcome (\ref{case.iterationlemmacase3}) or
(\ref{case.iterationlemmacase5}), since $d \leq D_1$), or we have
\begin{equation*}
|\wh{\Lambda_{N',d}(\theta)}| \ll c'\alpha|\wh{\Lambda_{N',d}}(0)|
\textrm{ for all } \theta \in \mathfrak{M}.
\end{equation*}
It follows that
\begin{eqnarray*}
 \int_{\theta \in \mathfrak{M}}{|(1_A -
\alpha1_{I})^\wedge(\theta)|^2|\wh{\Lambda_{N',d}}(\theta)|d\theta}
& \ll & c'\alpha |\wh{\Lambda_{N',d}}(0)|. \int{|(1_A -
\alpha1_{I})^\wedge(\theta)|^2d\theta}\\ & \ll & c'\alpha^2
N|\wh{\Lambda_{N',d}}(0)|,
\end{eqnarray*}
by Parseval's theorem. Hence we can choose $c' \gg 1$ so that
\begin{equation*}
\int_{\theta \in \mathfrak{M}'}{|(1_A -
\alpha1_{I})^\wedge(\theta)|^2|\wh{\Lambda_{N',d}}(\theta)|d\theta}
\gg \alpha^2 N|\wh{\Lambda_{N',d}}(0)|.
\end{equation*}
If $(D_1,D_0)$ is unexceptional and $dQ' > D_0$ then we are in
outcome (\ref{case.iterationlemmacase3}) or
(\ref{case.iterationlemmacase4}) or else we can apply Proposition
\ref{prop.majorarcsestimateforunexceptionalpairs}; if $(D_1,D_0)$ is
exceptional and $dQ' > D_1$ then we are in outcome
(\ref{case.iterationlemmacase3}) or (\ref{case.iterationlemmacase4})
or else we can apply Proposition
\ref{prop.majorarcsestimateforexceptionalpairs}. So either we are
done or we could apply the appropriate proposition and get
\begin{equation*}
|\wh{\Lambda_{N',d}}(\theta)| \leq
\frac{|\wh{\Lambda_{N',d}}(0)|}{\phi(q)} +
O\left((1+Q^{-1}N')N'D_1^2 \exp\left(-\frac{c_6\log N'}{\sqrt{\log
N'} + \log D_1}\right)\right).
\end{equation*}
In view of the definition of $Q$ this error term is
\begin{equation*}
O\left(\alpha^{-2}(\log^8 N)N'D_1^6 \exp\left(-\frac{c_6\log
N'}{\sqrt{\log N'} + \log D_1}\right)\right),
\end{equation*}
and so once again either we are in outcome
(\ref{case.iterationlemmacase3}) or (\ref{case.iterationlemmacase5})
of the lemma or
\begin{equation}\label{eqn.majorarcsestimate}
\sup_{\theta \in \mathfrak{M}_q^*}{|\wh{\Lambda_{N',d}}(\theta)|}
\ll \frac{|\wh{\Lambda_{N',d}}(0)|}{\phi(q)} \textrm{ for all } q
\leq Q'.
\end{equation}
Set $Q'':= c''^{-2}\alpha^{-2}$ for some $c'' > 0$ which will be
chosen shortly. Write
\begin{equation*}
\mathfrak{M}'':=\bigcup_{Q'' \leq q < Q'}{\mathfrak{M}_q^*} \textrm{
and } \mathfrak{M}''':=\bigcup_{q < Q''}{\mathfrak{M}_q^*},
\end{equation*}
so that $\mathfrak{M}'=\mathfrak{M}'' \cup \mathfrak{M}'''$. Then
\begin{equation*}
\int_{\theta \in \mathfrak{M}''}{|(1_A -
\alpha1_{I})^\wedge(\theta)|^2|\wh{\Lambda_{N',d}}(\theta)|d\theta}
\ll c''\alpha^2 N |\wh{\Lambda_{N',d}}(0)|
\end{equation*}
by Parseval's theorem and (\ref{eqn.majorarcsestimate}) since
$\phi(n) \gg n^{1/2}$; pick $c'' \gg 1$ so that
\begin{equation*}
\int_{\theta \in \mathfrak{M}'''}{|(1_A -
\alpha1_{I})^\wedge(\theta)|^2|\wh{\Lambda_{N',d}}(\theta)|d\theta}
\gg \alpha^2 N |\wh{\Lambda_{N',d}}(0)|.
\end{equation*}
Now, by the triangle inequality we get
\begin{equation*}
\sum_{q=1}^{Q''}{\int_{\theta \in \mathfrak{M}_q^*}{|(1_A -
\alpha1_{I})^\wedge(\theta)|^2|\wh{\Lambda_{N',d}}(\theta)|d\theta}}
\gg \alpha^2 N|\wh{\Lambda_{N',d}}(0)|,
\end{equation*}
and so by (\ref{eqn.majorarcsestimate})
\begin{equation*}
\sum_{q=1}^{Q''}{\frac{|\wh{\Lambda_{N',d}}(0)|}{\phi(q)}\int_{\theta
\in \mathfrak{M}_q^*}{|(1_A -
\alpha1_{I})^\wedge(\theta)|^2d\theta}} \gg \alpha^2
N|\wh{\Lambda_{N',d}}(0)|.
\end{equation*}
Thus by Corollary \ref{cor.specializedenergyincrementlemma} (with
the fact that $|\wh{\Lambda_{N',d}}(0)|
>0$) we find ourselves in the first case of the lemma.
\end{proof}

\section{Proof of Theorem \ref{thm.mainresult}}\label{sec.proofofmaintheorem}

The main argument is now fairly straightforward. We begin with a
preliminary technical lemma.
\begin{lemma}\label{lem.averaginglemma}
Suppose that $A \subset \{1,\dots,N\}$ has $\alpha N$ elements and
$d$ is an integer. Then there is a progression $P$ with common
difference $d$ and $|P| \gg \alpha N/d$ such that $|A\cap P| \geq
\alpha|P|/2$.
\end{lemma}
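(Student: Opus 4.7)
The plan is a standard averaging/pigeonhole argument: partition $\{1,\dots,N\}$ into arithmetic progressions of common difference $d$ whose length is roughly $\alpha N/d$, and show that at least one such progression must contain its fair share (up to a constant factor) of the elements of $A$.

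Concretely, I would set $L := \lfloor \alpha N/(4d)\rfloor$. The case $L=0$ is handled separately: then $\alpha N/d < 4$, and the singleton progression $P=\{a\}$ for any $a\in A$ (which exists if $\alpha>0$, else the lemma is vacuous) satisfies $|P|=1\gg \alpha N/d$ and $|A\cap P|=|P|\geq \alpha|P|/2$. Otherwise, for each residue class $r\in\{1,\dots,d\}$ modulo $d$, I would partition the elements of $\{r,r+d,r+2d,\dots\}\cap\{1,\dots,N\}$ into consecutive arithmetic progressions of common difference $d$, each of length exactly $L$ except possibly one shorter trailing block per residue class. This produces a family $\mathcal{F}$ of progressions partitioning $\{1,\dots,N\}$, with at most $d$ short trailing blocks contributing in total at most $d(L-1)\leq \alpha N/4$ elements.

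Now suppose for contradiction that every length-$L$ progression $P\in\mathcal{F}$ satisfied $|A\cap P|<\alpha|P|/2=\alpha L/2$. Summing over the partition (bounding $|A\cap P|$ trivially by $|P|$ on the short blocks and by $\alpha L/2$ on the full ones) would yield
\begin{equation*}
\alpha N \;=\; |A| \;<\; \frac{\alpha}{2}\cdot N \,+\, \frac{\alpha N}{4} \;=\; \frac{3\alpha N}{4},
\end{equation*}
a contradiction. Hence some $P\in\mathcal{F}$ with $|P|=L\gg \alpha N/d$ must satisfy $|A\cap P|\geq \alpha|P|/2$, which is exactly the required conclusion.

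The only slightly delicate point is keeping track of the short boundary blocks, and the factor $1/4$ in the definition of $L$ is chosen precisely so that their total contribution is absorbed by the gap between $\alpha/2$ and $\alpha$; beyond this minor bookkeeping there is no substantive obstacle.
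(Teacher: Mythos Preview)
Your proof is correct. Both your argument and the paper's are averaging arguments, but the packaging differs. The paper fixes a progression $P'$ of common difference $d$ and considers all its translates simultaneously via the convolution $1_A\ast 1_{P'}$: one checks that $\langle 1_A\ast 1_{P'},1_{[N]}\rangle = \alpha N|P'| + O(d|P'|^2)$, chooses $|P'|\gg \alpha N/d$ so that the error is at most half the main term, and then applies H\"older to find a single translate with density $\geq \alpha/2$. You instead partition $\{1,\dots,N\}$ into disjoint blocks of a fixed length and pigeonhole directly. Your route is slightly more elementary and self-contained (no convolution language, no appeal to H\"older), while the paper's phrasing is more in keeping with the analytic style of the surrounding sections and avoids the small bookkeeping with trailing blocks. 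Mathematically the two are equivalent, and neither gains anything quantitative over the other.
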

\begin{proof}
Let $P'$ be a progression with common difference $d$. Then
\begin{equation*}
\sum_{x \in \Z}{|1_{[N]} \ast 1_{P'}(x) - |P'|1_{[N]}(x)|} =
O(d|P'|^2),
\end{equation*}
whence
\begin{equation*}
|A||P'| = \langle 1_A,1_{[N]} \ast 1_{P'} \rangle + O(d|P'|^2) =
\langle 1_A \ast 1_{P'}, 1_{[N]} \rangle + O(d|P'|^2).
\end{equation*}
It follows that we can pick $|P'| \gg \alpha N/d$ such that
\begin{equation*}
\langle 1_A\ast 1_{P'} ,1_{[N]}  \rangle \geq \alpha N |P'|/2.
\end{equation*}
Thus, by H\"{o}lder's inequality, there is some translate $P$ of
$P'$ with the desired property.
\end{proof}
We now turn to the main proof.
\begin{proof}[Proof of Theorem \ref{thm.mainresult}]
Write $I$ for the interval $[N]$. We fix $D_1 \geq D_0 \geq 2$, to
be optimized at the end of the argument, and put
$D_1=D_0^{\max\{2c_{10}^{-1},1\}}$, where $c_{10}$ is the absolute
constant from Lemma \ref{lem.mainiterationlemma}. We consider two
cases according to whether $(D_1,D_0)$ is exceptional or unexceptional.\\

\noindent \textbf{($(D_1,D_0)$ is exceptional)} This gives us an
integer $d_D \leq D_0$ with a number of properties. By Lemma
\ref{lem.averaginglemma} there is a progression $P$ with common
difference $d_D$ and $|P| \gg \alpha N/d_D$ such that $|A \cap P|
\geq \alpha|P|/2$. Let $I_D:=\{1,\dots,|P|\}$ and let $A_D$ be the
affine transformation of $A \cap P$ so that it lies in $I_D$. We
write $\alpha_D$ for the density of $A$ in $I_D$ and $N_D$ for the
length of $I_D$. Thus
\begin{equation*}
\alpha_D \geq \alpha/2 \textrm{ and } N_D \gg \alpha N/D_0.
\end{equation*}
Furthermore, by the hypothesis on $A$, $A_D-A_D$ does not contain
any numbers of the form $(p-1)/d_D$ with $p$ a prime. Let $\eta>0$
be a parameter to be optimized later and let $P'$ an arithmetic
progression such that
\begin{equation*}
\alpha'N'^{\eta^2}d'^{-\eta}
\end{equation*}
is maximal, where $\alpha'$ is the relative density of $A_D$ on
$P'$, that is $|A_D \cap P'| / |P'|$,  $N'$ is the length of $P'$
and $d'$ is the common difference of $P'$. The choice of $\eta^2$
and $\eta$ is made with the benefit of hindsight; we could use two
different parameters and optimize for both at the end.

In view of the maximality of $P'$, we have $\alpha_DN_D^{\eta^2}
\leq \alpha' N'^{\eta^2}d'^{-\eta}$. Now, since $\alpha' \leq 1$,
$d' \geq 1$and $N' \leq N_D$ it follows that
\begin{equation*}
\alpha_D \leq \alpha', d' \leq \alpha_D^{-\eta^{-1}} \textrm{ and }
N' \geq \alpha_D^{\eta^{-2}}N_D,
\end{equation*}
whence
\begin{equation}\label{eqn.exceptionalvariablesbound1}
\alpha \ll \alpha', \textrm{ } \log d' \ll \eta^{-1}\log \alpha^{-1}
\end{equation}
and
\begin{equation}\label{eqn.exceptionalvariablesbound2}
\log N = \log N' + O(\eta^{-2}\log \alpha^{-1} + \log D_0).
\end{equation}
Again, let $I':=\{1,\dots,N'\}$ and $A'$ be the affine
transformation of $A_D \cap P'$ so that it lies in $I'$. Apply Lemma
\ref{lem.mainiterationlemma} to get the following possibilities.
\begin{enumerate}
\item Either there is a progression $P''$ of common difference $d'' =
O(\alpha'^{-2})$, length at least $(c_9\alpha'/d'd_D\log N')^8 N'$
such that $|A' \cap P''| \geq \alpha'(1+c_8)|P''|$;
\item or $A'-A'$ contains a number of the form $\frac{p-1}{d'd_D}$
with $p$ a prime;
\item or $N' \leq O(\exp(c_{10} \log^2 D_1))$ whence, by
(\ref{eqn.exceptionalvariablesbound2}),
\begin{equation*}
\log N \ll \eta^{-2}\log \alpha^{-1} +  \log^2 D_1;
\end{equation*} \item or $(d'd_D)^{-1} = O(D_1^{-c_{10}})$ whence, by the relationship between $D_0$ and $D_1$ we get $
d' \gg D_1^{-c_{10}/2}$ and so, by
(\ref{eqn.exceptionalvariablesbound1}),
\begin{equation*}
\log D_1 \ll \eta^{-1} \log \alpha^{-1};
\end{equation*} \item or $\alpha' = O(D_1^{-c_{10}})$.
\end{enumerate}
In the first case, the maximal way in which $P'$ was chosen ensures
that
\begin{equation*}
\alpha'N'^{\eta^2}d'^{-\eta} \geq
\alpha'(1+c_8)(N'(c_9\alpha'/d'd_D\log
N')^8)^{\eta^2}(c\alpha')^{-2\eta}d'^{-\eta},
\end{equation*}
from which we conclude
\begin{equation*}
\eta^{-2} \ll \eta^{-1}\log \alpha'^{-1} + \log D_0 + \log d' + \log
\log N'.
\end{equation*}
Inserting the bounds from (\ref{eqn.exceptionalvariablesbound1}) and
(\ref{eqn.exceptionalvariablesbound2}) and the fact that $\log D_0
\ll \log D_1$ we get
\begin{equation*}
\eta^{-2} \ll \eta^{-1}\log \alpha^{-1} + \log D_1 + \log \log N,
\end{equation*}
and hence, by solving the quadratic,
\begin{equation*}
\eta^{-1} \ll \log \alpha^{-1} + \sqrt{\log D_1} + \sqrt{\log \log
N}.
\end{equation*}
Write $C$ for the absolute constant hiding in the above expression.
We optimize $\eta$ be taking
\begin{equation*}
\eta^{-1} = 2C(\log \alpha^{-1} + \sqrt{\log D_1} + \sqrt{\log \log
N}),
\end{equation*}
and so we have derived a contradiction and must be in another of the
above cases. By assumption we are not in the second case so we
conclude that either
\begin{equation*}
\log N \ll \eta^{-2}\log \alpha^{-1} +  \log^2 D_1 \textrm{ or }
\log D_1 \ll \eta^{-1} \log \alpha^{-1}.
\end{equation*}
Inserting our choice of $\eta$ we get the either
\begin{equation}\label{eqn.exceptionaloutput}
\log N \ll \log^2D_1 \textrm{ or } \log D_1 \ll \log\alpha^{-1}(\log
\alpha^{-1} + \sqrt{\log D_1} + \sqrt{\log \log N}).
\end{equation}\\
\noindent\textbf{($(D_1,D_0)$ is unexceptional)} In this case we can
proceed directly without the aid of Lemma \ref{lem.averaginglemma}.
Let $\eta>0$ be a (new) parameter to be optimized later and let $P'$
an arithmetic progression such that
\begin{equation*}
\alpha'N'^{\eta^2}d'^{-\eta}
\end{equation*}
is maximal, where $\alpha'$ is the relative density of $A$ on $P'$,
$N'$ is the length of $P'$ and $d'$ is the common difference of
$P'$.

As before, in view of the maximality of $P'$, we have
\begin{equation}\label{eqn.unexceptionalvariablesbound1}
\alpha \ll \alpha', \textrm{ } \log d' \ll \eta^{-1}\log \alpha^{-1}
\end{equation}
and
\begin{equation}\label{eqn.unexceptionalvariablesbound2}
\log N = \log N' + O(\eta^{-2}\log \alpha^{-1}).
\end{equation}
Again, let $I':=\{1,\dots,N'\}$ and $A'$ be the affine
transformation of $A_D \cap P'$ so that it lies in $I'$. Apply Lemma
\ref{lem.mainiterationlemma} to get the following possibilities.
\begin{enumerate}
\item Either there is a progression $P''$ of common difference $d'' =
O(\alpha'^{-2})$, length at least $(c_9\alpha'/d'\log N')^8 N'$ such
that $|A' \cap P''| \geq \alpha'(1+c_8)|P''|$;
\item or $A'-A'$ contains a number of the form $\frac{p-1}{d'}$ with
$p$ a prime;
\item or $N' \leq O(\exp(c_{10} \log^2 D_1))$ whence, by
(\ref{eqn.unexceptionalvariablesbound2}),
\begin{equation*}
\log N \ll \eta^{-2}\log \alpha^{-1} +  \log^2 D_1;
\end{equation*} \item or $d'^{-1} = O(D_0^{-c_{10}}/\log^2D_1)$ whence $d' \gg D_1^{c_{10}^2/4}$ and so, by (\ref{eqn.unexceptionalvariablesbound1}),
\begin{equation*}
\log D_1 \ll \eta^{-1} \log \alpha^{-1};
\end{equation*} \item or $\alpha' = O(D_0^{-c_{10}}/\log^2D_1)$.
\end{enumerate}
The analysis proceeds much as before and we conclude that either
\begin{equation}\label{eqn.unexceptionaloutput}
\log N \ll \log^2D_1 \textrm{ or } \log D_1 \ll \log\alpha^{-1}(\log
\alpha^{-1} + \sqrt{\log \log N}).
\end{equation}\\

Write $C$ for the larger of the two constants hiding in the first
inequalities in (\ref{eqn.exceptionaloutput}) and
(\ref{eqn.unexceptionaloutput}). We optimize $D_1$ by taking $\log N
= 2C \log^2D_1$ so that we are never in the first case of either.
The result follows.
\end{proof}

\section*{Acknowledgment}
The second author would like to thank Ben Green.

\bibliographystyle{alpha}

\bibliography{master}

\end{document}